\def\edge{\ar@{-}}
\def\dedge{\ar@{.}}
\newcommand{\qed}{\hfill $\square$}
\newtheorem{theorem}{Theorem}[section]
\newtheorem{proposition}[theorem]{Proposition}
\newtheorem{definition}[theorem]{Definition}
\newtheorem{lemma}[theorem]{Lemma}
\newtheorem{corollary}[theorem]{Corollary}
\newtheorem{example}[theorem]{Example}
\def\widebar{\overline}
\def\ch{{\mathcal H}}
\def\spec{{\rm Spec}}
\def\hspec{\ch\!-\!\spec} 
\def\mz{{\mathbb Z}}
\newcommand{\qs}[1]{\left[#1\right]}
\newcommand{\ireduced}
{\widetilde{i}, \widetilde{i+1},\dots ,\widetilde{i+m-1}}
\newcommand\curlyma{\{\widetilde{\alpha},\widetilde{\alpha+1},\dots,\widetilde{\alpha+m-1}\}}
\newcommand{\q}{\medskip\par\noindent}
\newcommand\content[1]{{\rm content}(#1)}
\def\oq{\mathcal{O}_{q}}
\def\oqmmn{\oq(M_{m,n})}
\def\oqgmn{\oq(G(m,n))}
\def\oqgmdashn{\oq(G(m',n))}
\def\oqgtwofour{\oq(G(2,4))}
\def\gmn{G(m,n)}
\def\tnngmn{\gmn^{{\rm tnn}}}
\def\k{{\mathbb K}}
\def\gk{{\rm GKdim}}
\def\goesto{\longrightarrow}
\def\goesto{\longrightarrow} 
\title{Twisting the quantum grassmannian
}
\author{S Launois
\thanks{\,The research of the first named author was
supported by a Marie Curie European Reintegration Grant within the
$7^{\mbox{th}}$ European Community Framework Programme.}
~~and T H Lenagan
}
\date{}
\begin{document}
\maketitle 

\abstract{\footnotesize In contrast to the classical and semiclassical
settings, the Coxeter element $(12\dots n)$ which cycles the columns of an 
$m\times n$ matrix does not determine an automorphism of the quantum
grassmannian. Here, we show that this cycling can be obtained by defining a
cocycle twist. A consequence is that the torus invariant prime ideals of the
quantum grassmannian are permuted by the action of the Coxeter element
$(12\dots n)$; we view this as a quantum analogue of the recent result of
Knutson, Lam and Speyer that the Lusztig strata of the classical grassmannian
are permuted by $(12\dots n)$.}

\vskip .5cm
\noindent
{\em 2000 Mathematics subject classification:} 16W35, 16P40, 16S38, 17B37,
20G42. 

\vskip .5cm
\noindent
{\em Key words:} 
Quantum matrices, quantum grassmannian, cocycle twist, 
noncommutative dehomogenisation 

\section{Introduction} 

The symmetric group $S_n$ acts on the grassmannian $G(m,n)$ by permuting the
columns of an $m\times n$ matrix that determines a point in $G(m,n)$. If one
restricts to considering the totally nonnegative grassmannian $\tnngmn$ this
is no longer true; however, Postnikov, \cite[Remark 3.3]{post}, notes that the
cycle $c=(12\dots n)$ acts on the totally nonnegative grassmannian. Recently,
Knutson, Lam and Speyer, \cite{knut-lam-speyer}, showed that the Lusztig
strata of the classical grassmannian are permuted by $(12\dots n)$. In fact
this invariance property is even stronger. Indeed, Goodearl and Yakimov,
\cite{good-yak}, have found a Poisson interpretation of the Lusztig strata:
they coincide with the $\ch$-orbits of symplectic leaves of $G(m,n)$, where
$\ch$ is an $n$-dimensional algebraic torus. Recently Yakimov, \cite{yak},
showed that the Coxeter element $c$ induces a Poisson automorphism of
$G(m,n)$. As a consequence he showed that the $\ch$-orbits of symplectic
leaves of $G(m,n)$ are permuted by $c$; this gives a Poisson geometric proof
of Knutson, Lam and Speyer result.

 In view of the
close connections that have been discovered between totally nonnegative
matrices, the standard Poisson matrix variety and quantum matrices, see, for
example, \cite{good-laun-len, good-laun-len2}, and between the totally
nonnegative grassmannian and the quantum grassmannian, see, for example,
\cite{llr}, one might expect that the cycle $c$ produces an automorphism of
the quantum grassmannian. This is not the case, see
Example~\ref{example-nonautomorphism} below. With this in mind, one wonders
what the analogous result should be. Here, we provide the answer: there is a
$2$-cocyle which can be used to twist the quantum grassmannian; the resulting
twisted algebra is again isomorphic to the quantum grassmannian, and the
effect of the twist on a generating quantum minor $I$ is to produce (a scalar
multiple of) the quantum minor obtained by letting the cycle $c$ act on the
indices of $I$. A consequence of this result is that the torus invariant prime
ideals of the quantum grassmannian are permuted by the cycle $(12\dots n)$,
see Corollary~\ref{corollary-primes-permuted}; we view this as a quantum
analogue of the Knutson, Lam and Speyer result. \\

\section{Basic definitions}\label{section-basicdefs}

In this section, we will give the basic definitions of the objects that
interest us in this paper and recall several results that we need in our
proofs. 
Throughout, $\mathbb{K}$ will denote a base field, we set $\mathbb{K}^*:= \mathbb{K} \setminus \{0\}$, $q$ will be a
non-zero element of $\mathbb{K}$ and $m$ and $n$ denote positive integers with $m<n$. Moreover, we assume that there exists $p \in \mathbb{K}$ such that $p^m=q^2$. \\

\q
The quantisation of the coordinate ring of the affine variety $M_{m,n}$ of
$m \times n$ matrices with entries in $\mathbb{K}$ is denoted ${\mathcal
O}_q(M_{m,n})$. It is the $\mathbb{K}$-algebra generated by $mn$ indeterminates
$X_{ij}$, with $1 \le i \le m$ and $1 \le j \le n$, subject to the relations:
\[
\begin{array}{ll}  
X_{ij}X_{il}=qX_{il}X_{ij},&\mbox{ for }1\le i \le m,\mbox{ and }1\le j<l\le
n\: ;\\ 
X_{ij}X_{kj}=qX_{kj}X_{ij}, & \mbox{ for }1\le i<k \le m, \mbox{ and }
1\le j \le n \: ; \\ 
X_{ij}X_{kl}=X_{kl}X_{ij}, & \mbox{ for } 1\le k<i \le m,
\mbox{ and } 1\le j<l \le n \: ; \\
X_{ij}X_{kl}-X_{kl}X_{ij}=(q-q^{-1})X_{il}X_{kj}, & \mbox{ for } 1\le i<k \le
m, \mbox{ and } 1\le j<l \le n.
\end{array}
\]

\q
An {\it index pair} is a pair $(I,J)$ such
that $I \subseteq \{1,\dots,m\}$ and $J \subseteq \{1,\dots,n\}$ are subsets
with the same cardinality. Hence, an index pair is given by an integer $t$
such that $1 \le t \le m$ and ordered sets 
$I=\{i_1 < \dots < i_t\} \subseteq \{1,\dots,m\}$
and $J=\{j_1 < \dots < j_t\} \subseteq \{1,\dots,n\}$. To any such index pair
we associate the quantum minor 
\[ 
[I|J] = \sum_{\sigma\in S_t}
(-q)^{\ell(\sigma)} X_{i_{\sigma(1)}j_1} \cdots X_{i_{\sigma(t)}j_t} . 
\] 


\begin{definition}\label{def-q-grassmannian} 
{\rm 
The {\it quantisation of the coordinate ring of the grassmannian of
$m$-dimensional subspaces of $\mathbb{K}^n$}, denoted by $\oqgmn$
and informally referred to as the ($m\times n$) {\em quantum grassmannian} is the
subalgebra of $\oqmmn$ generated by the $m \times m$
quantum minors.
}
\end{definition} 


\q
A maximal (that is, $m\times m$) quantum minor in $\oqmmn$ corresponds to an
index pair $[\{1,\dots,m\}|J]$ with $J=\{j_1,\dots,j_m\}\subseteq
\{1,\dots,n\}$. We call such $J$ {\em index sets} and denote the corresponding
minor by $[J]$ or $[j_1,\dots,j_m]$ in what follows. Thus, such a $[J]$ is a generator of $\oqgmn$.

\q
When writing down an $m\times m$ quantum minor in $\oqgmn$, we will use the
convention that if a column index $j$ is greater than $n$ then $j$ is to be
read as $j-n$. For example, in $\oqgtwofour$ the minor specified by $[45]$ is
the quantum minor $[14]$. In order to stress
this point, we will use the convention that given any integer $j$ then
$\widetilde{j}$ is the integer in the set $\{1,\dots,n\}$ that is congruent to
$j$ modulo $n$. 

\q
A quantum minor $[\ireduced ]$ is said to be a {\em consecutive
quantum minor} of $\oqgmn$. Recalling the convention above, we see that there
are four consecutive minors in $\oqgtwofour$: they are $[12], [23], [34]$ and
$[\widetilde{4}\,\widetilde{5}] = [14]$. 
More generally, $\oqgmn$ has $n$ consecutive minors. 

\q
Two maximal quantum minors $[I]$ and $[J]$ are said to {\em quasi-commute} if
there is an integer $c$ such that $[I][J] = q^c[J][I]$. Recall that an element
$u$ of a ring $R$ is said to be a {\em normal element} if $uR = Ru$, in which
case $uR$ is a two-sided ideal. The following lemma, first obtained in
\cite[Lemma 3.7]{kroblec}, shows that consecutive quantum minors quasi-commute with all
maximal quantum minors.

\begin{lemma}\label{lemma-normal}

Let $[\ireduced ]$ be a consecutive quantum minor in the quantum grassmannian 
$\oqgmn$.
Then $[\ireduced ]$ quasi-commutes with each of the generating
quantum minors of $\oqgmn$. In particular, each  consecutive quantum minor 
is a normal
element of $\oqgmn$. \qed 
\end{lemma}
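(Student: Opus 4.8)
The plan is to establish the quasi-commutation by induction on $m$, using the noncommutative dehomogenisation that relates $\oqgmn$ to quantum matrices of smaller size, together with the known commutation relations among quantum minors in $\oqmmn$. First I would recall the quantum Pl\"ucker relations and the standard straightening rules, which express any product $[I][J]$ of maximal minors in terms of a standard monomial basis; these are available from the work of Kelly--Lenagan--Rigal and others. The key observation is that a consecutive quantum minor $[\ireduced]$ is as ``ordered'' as possible, so when one straightens $[\ireduced][J]$ the combinatorics collapse: the only surviving term should be a scalar multiple of $[J][\ireduced]$.

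Concretely, I would first treat the case where the column index set $\ireduced$ does not wrap around, i.e.\ it is literally $\{i, i+1, \dots, i+m-1\}$ with $i+m-1 \le n$. Here I would argue directly at the level of $\oqmmn$: the minor $[1,\dots,m \mid i,\dots,i+m-1]$ involves only the consecutive columns $i, \dots, i+m-1$, and using the $2\times 2$ relations listed in the excerpt one checks that for any other maximal minor $[J]$, every generator $X_{k,l}$ appearing in $[J]$ either $q$-commutes, $q^{-1}$-commutes, or genuinely commutes with the block of columns $i,\dots,i+m-1$ in a way that is uniform across the minor; the ``extra term'' $(q-q^{-1})X_{il}X_{kj}$ in the fourth relation cancels precisely because of the antisymmetrisation built into the consecutive minor. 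The bookkeeping here is the analogue of the classical fact that a minor on consecutive columns, being essentially a ``corner'' minor, behaves like a normal element; one extracts the integer $c$ explicitly as a difference $|J \cap \{i,\dots,i+m-1\}^{<}| - |J \cap \{i,\dots,i+m-1\}^{>}|$ type count, i.e.\ it depends only on how many columns of $J$ lie to the left versus the right of the consecutive block.

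For the wrap-around case, where $\ireduced = \{\widetilde{i},\dots,\widetilde{n},\widetilde{1},\dots\}$ actually splits as a high block together with a low block, I would use the normality of the ``top'' consecutive minor $[1,2,\dots,m]$ (or an induction that peels off one cyclic step at a time) and reduce to the previous case. Alternatively, and perhaps more cleanly, one can invoke the cocycle-twist / dehomogenisation machinery that the paper is about to develop: the consecutive minors are exactly the images of the variables that generate the relevant localisation, and normality is transparent in that presentation. Either way, once quasi-commutation with every generator $[J]$ is established, normality of $[\ireduced]$ is immediate: $[\ireduced]\,\oqgmn = \oqgmn\,[\ireduced]$ follows because $\oqgmn$ is generated by the $[J]$'s and each conjugates $[\ireduced]$ to a scalar multiple of itself.

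I expect the main obstacle to be the wrap-around minors: a genuinely consecutive-in-$\{1,\dots,n\}$ block is easy, but a minor like $[\widetilde{n-1},\widetilde{n},\widetilde{1}]$ mixes the last columns with the first, and the relations of $\oqmmn$ are not cyclically symmetric (that asymmetry is, after all, the whole point of the paper). So the crux is to show that even these ``cyclically consecutive'' minors remain normal, which I would handle by an inductive argument in $n$ or by directly verifying the straightening relations $[\widetilde{i}\cdots][J] = q^c [J][\widetilde{i}\cdots]$ using the quantum Pl\"ucker relations in the localisation where $[1,\dots,m]$ is inverted. A secondary, more routine obstacle is simply computing the exponent $c$ in each case and checking it is well defined; this is a finite combinatorial check that I would record in a lemma but not belabour in the main text.
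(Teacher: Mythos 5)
You should note first that the paper itself gives no argument for this lemma: it is quoted from Krob--Leclerc \cite[Lemma 3.7]{kroblec}, so your proposal is being measured against that cited proof rather than anything in the text. Your treatment of the non-wrap-around case is essentially sound, although the mechanism is not the ``antisymmetrisation'' you invoke: for an interval column set $J$ there is simply no column strictly between $\min J$ and $\max J$ lying outside $J$, and for such $J$ the minor $[1,\dots,m\,|\,J]$ genuinely $q^{\pm 1}$-commutes with every generator $X_{kl}$ of $\oqmmn$ (exponent $0$, $-1$ or $+1$ according as $l\in J$, $l<\min J$ or $l>\max J$), so it is normal already in $\oqmmn$ and your left/right count of the exponent for $[J'][J]$ follows because every monomial of $[J']$ uses the same column multiset.

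The genuine gap is the wrap-around case, which you correctly identify as the crux but do not actually close. None of the routes you sketch works as stated. The suggestion to ``invoke the dehomogenisation machinery, where normality is transparent'' is circular: the localisation $\oqgmn[[M_{\alpha}]^{-1}]$ can only be formed because $[M_{\alpha}]$ is already known to be normal --- that is precisely what Lemma~\ref{lemma-normal} is used for in the paper --- and in any case the variables $x_{ij}$ of that presentation are not consecutive minors, so transparency there says nothing about the element at issue. A direct argument ``at the level of the generators $X_{kl}$'' cannot work either, because wrap-around consecutive minors are not normal in $\oqmmn$: already $[14]\in\oq(M_{2,4})$ fails to $q$-commute with $X_{12}$, since column $2$ lies strictly between $1$ and $4$; the required cancellations only happen against whole maximal minors, via Pl\"ucker-type identities, which is exactly what the Krob--Leclerc proof supplies and what your sketch never verifies. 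Your idea could be rescued non-circularly by dehomogenising only at $M_1=[1,\dots,m]$ (normal by your non-wrap case), checking via the image formula of Corollary~\ref{corollary-phi-of-I} that a wrap-around consecutive minor maps to a unit multiple of a corner minor of $\k[x_{ij}]$ (initial rows, final columns), which is normal in the quantum matrix algebra and quasi-commutes with $y_1$, and then descending quasi-commutation from the localisation back to $\oqgmn$; but none of these steps is carried out, and ``an induction peeling off one cyclic step at a time'' is not an argument, since cycling is exactly what fails to be an automorphism here.
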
 

\q
A consequence of this result is that the powers of a consecutive quantum minor
form an Ore set in the noetherian domain $\oqgmn$; and so it is possible to invert the consecutive
quantum minor in a localisation.

\q
In order to facilitate computations, we need a version of the Quantum Muir's
Law of Extensible Minors. This result was first obtained by Krob and Leclerc,
\cite[Theorem 3.4]{kroblec}, with a proof involving quasi-determinants. The
version below, which is sufficient for our needs, is taken from
\cite[Proposition 1.3]{lr3}, and is adapted for use in the quantum
grassmannian.

\begin{proposition} \label{proposition-qmuir} 
Let $I_s, J_s$, for $1\leq s\leq d$, be $m$-element subsets of $\{1,\dots,n\}$
and let $c_s\in\mathbb{K}$ be such that $\sum_{s=1}^d c_s[I_s][J_s]=0$ 
in $\oqgmn$. Suppose that
$P$ is a subset of $\{1,\dots,n\}$ such that $(\cup_{s=1}^d I_s) \cup
(\cup_{s=1}^d J_s)\subseteq P$ and let 
$\widebar{P}$ denote $\{1,\dots,n\}\backslash
P$. Then 
\[
\sum_{s=1}^d c_s[I_s\sqcup \widebar{P}][J_s\sqcup \widebar{P}]=0.
\]
holds in $\oqgmdashn$, where $m' = m+\#\widebar{P}$.\qed 
\end{proposition}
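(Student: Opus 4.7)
The plan is to lift the relation from the quantum grassmannian to the ambient quantum matrix algebra, apply the original Quantum Muir's Law of Extensible Minors in the form of \cite[Proposition 1.3]{lr3}, and then read the extended relation back inside the grassmannian subalgebra of a larger quantum matrix algebra. The key observation is that the extension operation $I \mapsto I \sqcup \widebar{P}$ on index sets corresponds to extending an $m \times m$ quantum minor of $\oqmmn$ by adjoining the rows $\{m+1,\dots,m'\}$ and the columns $\widebar{P}$ to obtain an $m' \times m'$ quantum minor of $\oq(M_{m',n})$.

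First I would recall that $\oqgmn$ is a subalgebra of $\oqmmn$ by definition, so each generating minor $[I_s]$ of the grassmannian is literally the maximal quantum minor $[\{1,\dots,m\}|I_s]$ of the matrix algebra. Hence the hypothesis $\sum_{s=1}^d c_s [I_s][J_s] = 0$ is a relation among $m \times m$ quantum minors in $\oqmmn$. Since the hypothesis $(\cup_s I_s)\cup(\cup_s J_s) \subseteq P$ guarantees that $\widebar{P}$ is disjoint from every column set $I_s, J_s$, the extensions $I_s \sqcup \widebar{P}$ and $J_s \sqcup \widebar{P}$ are well-defined index sets of size $m' = m + \#\widebar{P}$.

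Next I would invoke \cite[Proposition 1.3]{lr3}: extend each summand by the fixed row set $\{m+1,\dots,m'\}$ and the fixed column set $\widebar{P}$. Because the row/column extension is identical on every term of the relation, the Muir extension applied termwise yields the identity
\[
\sum_{s=1}^d c_s [\{1,\dots,m'\}\,|\, I_s \cup \widebar{P}]\,[\{1,\dots,m'\}\,|\, J_s \cup \widebar{P}] = 0
\]
inside $\oq(M_{m',n})$. Restricting to the subalgebra generated by $m' \times m'$ quantum minors, this is precisely $\sum_s c_s [I_s \sqcup \widebar{P}][J_s \sqcup \widebar{P}] = 0$ in $\oqgmdashn$, which is the required conclusion.

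The only real point to verify is that the hypotheses of the Muir-type extension result in \cite{lr3} are met uniformly across all terms: the row-extension set and column-extension set are the same on every summand, no coefficient distortion is introduced by the extension, and the disjointness of $\widebar{P}$ with each $I_s$ and $J_s$ ensures that the extended minors are honest maximal minors of the larger matrix algebra rather than degenerate expressions. Once this uniformity is in hand, the proposition follows immediately and no further computation with quasi-determinants or commutation relations is required.
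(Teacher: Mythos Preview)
The paper does not supply its own proof of this proposition: the statement ends with a \qed\ and is explicitly attributed to \cite[Proposition~1.3]{lr3} (itself adapted from Krob--Leclerc). So there is nothing to compare against on the paper's side beyond the citation.

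Your proposal is essentially the ``adaptation for use in the quantum grassmannian'' that the paper alludes to: view each grassmannian minor $[I_s]$ as the maximal minor $[\{1,\dots,m\}\,|\,I_s]$ in quantum matrices, pass to the larger algebra $\oq(M_{m',n})$, and invoke the quantum Muir law to adjoin the extra rows $\{m+1,\dots,m'\}$ and the extra columns $\widebar{P}$. This is the right idea and matches what the cited reference does.

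Two small points worth tightening. First, you silently use that $\oqmmn$ sits inside $\oq(M_{m',n})$ as the subalgebra generated by the first $m$ rows; this is true, but it is the step that lets you speak of rows $m+1,\dots,m'$ at all, so it should be said. Second, the classical (and Krob--Leclerc) form of Muir's law extends the row and column index sets by the \emph{same} complementary set inside a square matrix; here you are extending rows by $\{m+1,\dots,m'\}$ and columns by $\widebar{P}$, two different sets of the same cardinality. This is exactly what \cite[Proposition~1.3]{lr3} is formulated to handle, so your citation covers it, but you should be aware that it is not an immediate corollary of the square-matrix version without a further reduction (e.g.\ relabelling columns). With those caveats, your argument is correct.
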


\q
This result is used, for example, 
when it is necessary to write down a commutation relation
between two maximal quantum minors $[I]$ and $[J]$, say. The usefulness of the
result is that one may delete the common members of the index pairs $I$ and
$J$ to establish the commutation relation.

\section{Cycling does not induce an automorphism}

In contrast to the classical and semiclassical settings, the cycle $(12\dots
n)$ does not act as an automorphism on the quantum grassmannian. We show this
here by considering $\oqgtwofour$. 

\q
First, we summarize the commutation relations and the quantum 
Pl\"ucker relation for 
$\oqgtwofour$; which can easily be obtained from the 
defining relations of quantum matrices.

\[[ij][ik] = q[ik][ij],\quad [ik][jk] = q[jk][ik],\quad
\mbox{\rm for $i<j<k$}\]
and
\[
\qs{14}\qs{23}  =  \qs{23}\qs{14}, \quad [12][34] =q^2[34][12], \quad
\qs{13}\qs{24}  =  \qs{24}\qs{13} 
        + \left( q-q^{-1} \right) \qs{14}\qs{23}.\]
        There is also a 
quantum Pl\"{u}cker relation
$
\qs{12}\qs{34} - q\qs{13}\qs{24} +q^2\qs{14}\qs{23} = 0
$. 
This  quantum Pl\"{u}cker relation may be rewritten as 
$
\qs{34}\qs{12} - q^{-1}\qs{24}\qs{13} +q^{-2}\qs{23}\qs{14} = 0
$ 
and one can also check that 
$
\qs{13}\qs{24}  = q^2 \qs{24}\qs{13} 
+ \left( q^{-1} -q\right) \qs{12}\qs{34}.
$

\begin{example}\label{example-nonautomorphism} 
{\rm 
Let $\theta[ij]:=[i+1,j+1]$, with the convention that $\theta(4)=1$; that is, 
we work modulo $4$ and $\theta$ is cycling the indices of quantum minors:
\[\theta[ij]=[\widetilde{c(i)},\widetilde{c(j)}],
 \]
where $c$ denotes the cycle $(1234)$.
\q
In the classical case, $\theta$ 
induces an isomorphism, and this is also the case in the Poisson setting, 
\cite{yak}. 

\q
However, $\theta$ does not induce an automorphism of $\oqgtwofour$, since, 
for example, the 
quantum Pl\"ucker relation is not preserved: if we assume that 
$\theta$ induces an automorphism then we calculate
\[
0=\theta(0) =
\theta( [12][34]-q[13][24]+q^2[14][23]) = 
[23][14]-q[24][13]+q^2[12][34].
\]
However, one can check that $[23][14]-q[24][13]+q^2[12][34]\neq 0$. For, suppose that
$[23][14]-q[24][13]+q^2[12][34]=0$, then $[23][14]-q[24][13]+q^4[34][12]=0$. However, from the
second version of the quantum Pl\"ucker relation, we know that
$[14][23]-q[24][13]+q^2[34][12]=0$. Subtract one of these equations from the other and
note that $[14][23]=[23][14]$ to obtain $(q^4-q^2)[34][12]=0$, a contradiction, 
provided that $q^2\neq 1$. 
}
\end{example}

\section{Dehomogenisation at a consecutive 
minor} 

Explicit calculations in the quantum grassmannian can be difficult due to 
the awkward defining relations (quantum Pl\"ucker relations). 
For this reason, 
it is often useful to transfer to an overring where the defining relations 
are simpler. This can be achieved by localising at any 
consecutive quantum minor, and this leads to consideration of the 
noncommutative dehomogenisation isomorphism for an arbitrary consecutive 
quantum minor. \\

\q 
Set $M_{\alpha}:=\curlyma$ in $\oqgmn$. Now, $[M_{\alpha}]$ is a normal element, by 
Lemma~\ref{lemma-normal}; and so we may form the localisation 
$\oqgmn[[M_{\alpha}]^{-1}]$. In $\oqgmn[[M_{\alpha}]^{-1}]$ 
set 
\[
x_{ij}:= [M_{\alpha}\cup
\{\widetilde{j+\alpha+m-1}\}\backslash \{\widetilde{\alpha+m-i}\}]
[M_{\alpha}]^{-1}.
\]

\begin{theorem} \label{theorem-dehomiso}
The subalgebra $\mathbb{K}[x_{ij}]$ of $\oqgmn[[M_{\alpha}]^{-1}]$ 
is a $q$-quantum matrix algebra; 
that is, $\mathbb{K}[x_{ij}]$ is isomorphic to $\mathcal{O}_q(M_{m,n-m})$ 
by an isomorphism that send $x_{ij}$ to $X_{ij}$. 
Moreover there is an isomorphism  
\[
\phi_{\alpha}:\oqgmn[[M_{\alpha}]^{-1}]\goesto 
\mathbb{K}[x_{ij}][y_{\alpha}^{\pm 1};\sigma_{\alpha}].
\]
where $\sigma_{\alpha}$ is the automorphism of the quantum matrix algebra 
$\mathbb{K}[x_{ij}]$ defined by 
$\sigma_{\alpha}(x_{ij})M_{\alpha} = M_{\alpha}x_{ij}$. 
Under this isomorphism, $y_{\alpha}= \phi_{\alpha}(M_{\alpha})$. 
\end{theorem}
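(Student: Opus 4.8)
The plan is to exhibit the dehomogenisation isomorphism as a noncommutative analogue of the classical construction that strips a homogeneous coordinate off a projective variety, using the consecutive minor $[M_\alpha]$ as the ``homogenising variable''. First I would reduce to the case $\alpha=1$: the cyclic relabelling that carries $M_\alpha$ to $M_1$ does not literally act on $\oqgmn$ (that is the point of Example~\ref{example-nonautomorphism}), so instead I would check that the arguments below are insensitive to the choice of $\alpha$, or simply carry $\alpha$ through the computation. The substantive content splits into two parts: (1) the subalgebra $\mathbb{K}[x_{ij}]$ generated by the displayed elements is a copy of $\mathcal{O}_q(M_{m,n-m})$; (2) inverting $[M_\alpha]$ and adjoining it back produces a skew-Laurent extension of this quantum matrix algebra, and the natural map $\phi_\alpha$ is an isomorphism onto it.

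For part (1), the key step is to verify that the $x_{ij}$ satisfy exactly the four families of $\mathcal{O}_q(M_{m,n-m})$ relations. Here I would lean on Proposition~\ref{proposition-qmuir} (Quantum Muir): each $x_{ij}$ is a ratio of two $m\times m$ quantum minors that agree in all but one column, and a product $x_{ij}x_{kl}$ can be rewritten, after clearing the $[M_\alpha]^{-1}$ factors using normality (Lemma~\ref{lemma-normal}), as a relation among quantum minors on the index set $M_\alpha\cup\{\widetilde{i+\alpha+m-1},\widetilde{j+\alpha+m-1}\}$. Muir's law lets me delete the bulk common indices $M_\alpha$ and reduce each required identity to a quantum Plücker/commutation relation among $2\times 2$ or at most small quantum minors, i.e. to the defining relations of $\mathcal{O}_q(M_{m,n-m})$ themselves (this is essentially the same mechanism used for $\oqgtwofour$ in \S3). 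One also needs the $x_{ij}$ to generate: since $\{\widetilde{j+\alpha+m-1}\}$ ranges over the $n-m$ columns outside $M_\alpha$ as $j$ runs over $1,\dots,n-m$ and $\{\widetilde{\alpha+m-i}\}$ ranges over the $m$ columns inside $M_\alpha$, every generating minor $[J]$ of $\oqgmn$ can be recovered from the $x_{ij}$ and $[M_\alpha]$ by a straHs-type expansion (quantum exterior/Laplace expansion along the columns where $J$ differs from $M_\alpha$), which simultaneously proves surjectivity of $\phi_\alpha$ onto the skew-Laurent ring.

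For part (2), I would first note that $[M_\alpha]$ is normal in $\oqgmn$, so conjugation by it defines an automorphism, and one checks it restricts to an automorphism $\sigma_\alpha$ of $\mathbb{K}[x_{ij}]$ by computing $[M_\alpha]x_{ij}[M_\alpha]^{-1}$ explicitly (again via Muir's law this is a scalar power of $q$ times $x_{ij}$ plus possibly a correction, but for a consecutive minor one expects a clean monomial rescaling of the variables in each row/column block). Set $y_\alpha:=[M_\alpha]$; then $y_\alpha$ is a unit in the localisation, $y_\alpha x_{ij} = \sigma_\alpha(x_{ij})y_\alpha$, and $\mathbb{K}[x_{ij}]$ together with $y_\alpha^{\pm1}$ generate the whole localisation (by the expansion in part (1)). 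This gives a surjection $\mathbb{K}[x_{ij}][y_\alpha^{\pm1};\sigma_\alpha]\twoheadrightarrow\oqgmn[[M_\alpha]^{-1}]$. Injectivity I would get by a dimension/grading argument: $\oqgmn$ is $\mathbb{N}$-graded by total minor-degree with $[M_\alpha]$ in degree $1$ and a normal nonzerodivisor, so the localisation is $\mathbb{Z}$-graded with degree-zero part $\mathbb{K}[x_{ij}]$; comparing graded pieces shows the surjection is an isomorphism, and $\phi_\alpha$ is its inverse.

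The main obstacle I expect is the bookkeeping in part (1): correctly tracking the sign/power-of-$q$ coefficients that appear when one reorders columns inside the index sets $M_\alpha\cup\{\dots\}\backslash\{\dots\}$ (the ``tilde'' reduction modulo $n$ makes the relative order of $\widetilde{j+\alpha+m-1}$ and the elements of $M_\alpha$ depend on $j$), and making sure that after applying Quantum Muir the residual small-minor identity is precisely the $\mathcal{O}_q$ relation with the right exponent, rather than an off-by-a-power-of-$q$ variant — this is exactly where the hypothesis that a suitable $p$ with $p^m=q^2$ exists, and the freedom to rescale the $x_{ij}$, will be needed to normalise the constants.
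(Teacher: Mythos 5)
Your proposal is essentially a from-scratch reconstruction of facts that the paper simply cites: the paper's proof of Theorem~\ref{theorem-dehomiso} consists of quoting \cite[Theorem 3.2]{len-russ} for the statement that $\mathbb{K}[x_{ij}]$ is a $q$-quantum matrix algebra, extending the inclusion $\mathbb{K}[x_{ij}]\goesto\oqgmn[[M_{\alpha}]^{-1}]$ to the skew-Laurent ring by the universal property, and invoking \cite[Lemma 3.1]{len-russ} together with the dehomogenisation isomorphism \cite[Lemma 3.1]{klr} for bijectivity. Your route --- verify the $\mathcal{O}_q(M_{m,n-m})$ relations by clearing the $[M_{\alpha}]^{-1}$ factors via normality (Lemma~\ref{lemma-normal}) and deleting the common indices with Quantum Muir (Proposition~\ref{proposition-qmuir}), prove generation/surjectivity by writing an arbitrary $[I]$ as a quantum minor in the $x_{ij}$ times $[M_{\alpha}]$, and get injectivity from the $\mathbb{Z}$-grading in which $[M_{\alpha}]$ has degree $1$ and the $x_{ij}$ have degree $0$ --- is exactly the mechanism behind those cited results, and your grading argument for injectivity is a clean, correct substitute for the appeal to \cite{klr}. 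What your sketch buys is self-containedness; what the paper's citations buy is that the genuinely delicate point (that the relations come out to be precisely the standard quantum matrix relations for every $\alpha$, despite the cyclic wrap-around of the column indices) is someone else's theorem.

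That delicate point is also where your sketch has its one real soft spot. Your final paragraph suggests that the hypothesis $p^m=q^2$ and ``the freedom to rescale the $x_{ij}$'' may be needed to normalise constants. Neither is available nor needed here: $p$ enters the paper only later, in the definition of the $2$-cocycle, and the theorem asserts an isomorphism sending $x_{ij}$ to $X_{ij}$ on the nose, so rescaling the generators would only prove a strictly weaker statement. The content of \cite[Theorem 3.2]{len-russ} is precisely that no such normalisation is required: with the $x_{ij}$ as defined, the powers of $q$ produced by the Muir reductions are the standard ones. So to complete your argument you must actually carry out the bookkeeping you defer (tracking the relative cyclic position of $\widetilde{j+\alpha+m-1}$ with respect to $M_{\alpha}$ and to the other adjoined column), rather than plan to absorb any discrepancy into a rescaling --- if a discrepancy appeared, it would signal an error in the computation, not a need to adjust the generators.
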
 

\begin{proof}
The fact that $\mathbb{K}[x_{ij}]$ is a quantum matrix algebra is established in
\cite[Theorem 3.2]{len-russ}. The inclusion $\rho_{\alpha}:\mathbb{K}[x_{ij}]\goesto
\oqgmn[[M_{\alpha}]^{-1}]$ extends to a homomorphism $\rho_{\alpha}:\mathbb{K}[x_{ij}][y_{\alpha}^{\pm
1};\sigma_{\alpha}]\goesto \oqgmn[[M_{\alpha}]^{-1}]$, by the universal property of skew 
polynomial extensions. The fact that the extension 
$\rho_{\alpha}$ is an isomorphism
follows from \cite[Lemma 3.1]{len-russ} and the dehomogenisation
isomorphism \cite[Lemma 3.1]{klr}. Now, set $\phi_{\alpha} = \rho_{\alpha}^{-1}$. 
\qed
\end{proof}

\q
Next, we need to calculate the effect of $\phi_{\alpha}$ on generating quantum minors
of $\oqgmn$. 

\q 
Let $I$ be an $m$-element subset of $\{1,\dots,n\}$. For a fixed $\alpha$, set 
$I_r:=I\cap M_{\alpha}$ and $I_c:= I\backslash I_r$; so that 
$I=I_r\sqcup I_c$ (the notation is chosen because $I_r$ will give information 
about the row set of the image of $[I]$ and $I_c$ will give information 
about the column set). 

\q 
To simplify the notation somewhat, 
if $N$ is a subset of integers, and $i$ is an integer, then
$i+N= \{i+k\mid k\in N\}$. 

\begin{corollary}\label{corollary-phi-of-I}
Let $[I]$ be a generating quantum minor of $\oqgmn$. Then 
\[
\phi_{\alpha}([I])= [(\alpha +m)- (M_{\alpha}\backslash I_r)\mid 
I_c-(\alpha +m-1)]y_{\alpha}.
\]
\end{corollary}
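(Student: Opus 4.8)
The plan is to deduce the formula from an identity that holds already in the localisation $\oqgmn[[M_\alpha]^{-1}]$, before applying $\phi_\alpha$. Set $k:=\#I_c$, write $\mathbf{i}:=(\alpha+m)-(M_\alpha\backslash I_r)\subseteq\{1,\dots,m\}$ and $\mathbf{j}:=I_c-(\alpha+m-1)\subseteq\{1,\dots,n-m\}$ (all indices read in the displayed residue ranges), and let $\langle\mathbf{i}\mid\mathbf{j}\rangle$ denote the $k\times k$ quantum minor of the quantum matrix algebra $\mathbb{K}[x_{ij}]\subseteq\oqgmn[[M_\alpha]^{-1}]$ with row set $\mathbf{i}$ and column set $\mathbf{j}$. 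I would prove
\[ [I]=\langle\mathbf{i}\mid\mathbf{j}\rangle\,[M_\alpha]\qquad\text{in }\oqgmn[[M_\alpha]^{-1}]. \]
Since $\phi_\alpha$ is an algebra isomorphism with $\phi_\alpha(x_{ij})=X_{ij}$ and $\phi_\alpha([M_\alpha])=y_\alpha$ (Theorem~\ref{theorem-dehomiso}), applying it to this identity turns the left-hand side into $\phi_\alpha([I])$ and the right-hand side into the quantum minor in the $X_{ij}$ with row set $\mathbf{i}$ and column set $\mathbf{j}$, multiplied by $y_\alpha$ — which is exactly the claimed expression.

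I would prove the displayed identity by induction on $k=\#I_c$. For $k=0$ we have $I=M_\alpha$, $\mathbf{i}=\mathbf{j}=\emptyset$, and the identity reads $[M_\alpha]=1\cdot[M_\alpha]$. For $k=1$, unravelling the congruences shows that $I$ is precisely $M_\alpha$ with its element $\widetilde{\alpha+m-i}$ replaced by $\widetilde{j+\alpha+m-1}$, where $\{i\}=\mathbf{i}$ and $\{j\}=\mathbf{j}$; so $[I]=x_{ij}[M_\alpha]=\langle\{i\}\mid\{j\}\rangle[M_\alpha]$ directly from the definition of $x_{ij}$. For $k\ge2$, fix $c\in I_c$ and let $j\in\mathbf{j}$ be the index it determines. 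The crucial step is to apply a suitable quantum Pl\"ucker relation of $\oqgmn$ to the pair $([I],[M_\alpha])$, exchanging the single index $c$: after discarding the terms that vanish for repeated-index reasons, this yields
\[ [I]=\sum_{s\in M_\alpha\backslash I_r}\lambda_s\,[\,I\backslash\{c\}\cup\{s\}\,]\,[\,M_\alpha\backslash\{s\}\cup\{c\}\,]\,[M_\alpha]^{-1}, \]
with each $\lambda_s\in\mathbb{K}^*$ a monomial in $q$. In the $s$-th summand, $[\,M_\alpha\backslash\{s\}\cup\{c\}\,][M_\alpha]^{-1}$ is (a standard reordering of) the generator $x_{i(s),j}$, where $i(s)$ is determined by $\widetilde{\alpha+m-i(s)}=s$, while $[\,I\backslash\{c\}\cup\{s\}\,]$ has column part of size $k-1$ and hence, by the inductive hypothesis, equals $\langle\mathbf{i}\backslash\{i(s)\}\mid\mathbf{j}\backslash\{j\}\rangle[M_\alpha]$. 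Substituting, and moving the normal element $[M_\alpha]$ past the minors with which it quasi-commutes (Lemma~\ref{lemma-normal}), the right-hand side should reorganise into a quantum Laplace (column) expansion of $\langle\mathbf{i}\mid\mathbf{j}\rangle$ along the column indexed by $j$ — a relation that holds in every quantum matrix algebra — completing the induction.

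The main obstacle is the bookkeeping of signs and powers of $q$. One must pin down the exact quantum Pl\"ucker relation used in the inductive step (so that the coefficients $\lambda_s$ match, after the $[M_\alpha]$-commutations are accounted for, the coefficients of the appropriately one-sided quantum Laplace expansion), reorder the non-standard index sets $I\backslash\{c\}\cup\{s\}$ and $M_\alpha\backslash\{s\}\cup\{c\}$ into increasing form, and check that all contributions cancel. Proposition~\ref{proposition-qmuir} is the tool that keeps this finite: every minor that appears — $[I]$, $[M_\alpha]$, and the one-swap minors defining the $x_{ij}$ — contains the common set $I_r$, because only elements of $M_\alpha\backslash I_r$ are ever exchanged, so the required Pl\"ucker relation and the commutation factors may be computed after deleting the indices in $I_r$, i.e.\ in a quantum grassmannian with $m$ replaced by $k$. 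As a reassurance that the cancellations go through, the case $m=2$, $n=4$, $\alpha=1$, $I=\{3,4\}$ comes down to the identity $(x_{11}x_{22}-qx_{12}x_{21})[12]=[34]$ in $\oqgtwofour$, which follows in a few lines from the $\oqgtwofour$ quantum Pl\"ucker relation.
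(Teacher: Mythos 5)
Your route is genuinely different from the paper's, but in a telling way: the identity you aim to prove in the localisation, namely $[I]=[\mathbf{i}\mid\mathbf{j}]_x\,[M_\alpha]$ with $\mathbf{i}=(\alpha+m)-(M_\alpha\backslash I_r)$ and $\mathbf{j}=I_c-(\alpha+m-1)$, is precisely the content of \cite[Proposition 4.3]{len-russ}, which the paper quotes: that result gives $\rho_\alpha([I|J])=[M_\alpha\backslash((\alpha+m)-I)\sqcup((\alpha+m-1)+J)][M_\alpha]^{-1}$ for an arbitrary quantum minor of $\mathbb{K}[x_{ij}]$, and the paper's entire proof is then a two-line index-set computation checking that $\rho_\alpha$ applied to the claimed right-hand side returns $[I]$; no Pl\"ucker relations, Laplace expansions or induction are needed. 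So what you are proposing is to reprove that cited ingredient from scratch. That is legitimate in principle (and your base cases and your $\oqgtwofour$ sanity check, which does come out to $(x_{11}x_{22}-qx_{12}x_{21})[12]=[34]$, are correct), but it makes the argument much heavier than the paper's.

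The difficulty is that, as written, the inductive step is a plan rather than a proof, and it is exactly where the content lies. You invoke ``a suitable quantum Pl\"ucker relation'' exchanging the single index $c$ between $[I]$ and $[M_\alpha]$, with unspecified monomial coefficients $\lambda_s$, and then assert that the resulting sum ``should reorganise'' into a column Laplace expansion of $[\mathbf{i}\mid\mathbf{j}]_x$. Neither ingredient is available verbatim in this paper (only the $G(2,4)$ Pl\"ucker relation is recorded, so the exchange relation would need a precise citation, e.g.\ to \cite{kroblec} or \cite{lr2}), and the coefficient matching is not checked: you must fix the order of the two minors in the exchange relation, reorder the non-increasing index sets, account for the factor picked up when $[M_\alpha]$ is moved past $x_{i(s)j}$ (which, as Lemmas \ref{lemma-sigma-1} and \ref{lemma-sigma-2} show, is $q$ or $q^{-1}$ depending on whether the wrap-around column is involved), and verify that the surviving $\lambda_s$ reproduce the $(-q)^{\bullet}$ coefficients of a one-sided Laplace expansion along the column $j$ (which itself is cleanest only when $j$ is an extreme column of $\mathbf{j}$, so the choice of $c$ matters). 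Until these verifications are supplied the argument has a genuine gap; the quickest repair is simply to cite \cite[Proposition 4.3]{len-russ} as the paper does, at which point the induction becomes unnecessary.
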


\begin{proof}
By using \cite[Proposition 4.3]{len-russ}, we see that for a 
quantum minor 
$[I|J]$ of the quantum matrix algebra $\mathbb{K}[x_{ij}]$ 
\[
\rho_{\alpha}([I|J])= 
[M_{\alpha}\backslash ((\alpha +m)-I)\sqcup 
((\alpha +m-1)+J)][M_{\alpha}]^{-1}.
\]
As $\phi_{\alpha} = \rho_{\alpha}^{-1}$,  
the claim will be established once we show that 
\[
\rho_{\alpha}([(\alpha +m)- (M_{\alpha}\backslash I_r)\mid 
I_c-(\alpha +m-1)] \cdot y_{\alpha})= [I].
\]
Now, 
\begin{eqnarray*}
\lefteqn{\rho_{\alpha}([(\alpha +m)- (M_{\alpha}\backslash I_r)\mid 
I_c-(\alpha +m-1)] \cdot y_{\alpha})
=}\\
&&
[M_{\alpha}\backslash ((\alpha +m)-((\alpha +m)-M_{\alpha}\backslash I_r)
\sqcup
((\alpha +m-1)+(I_c-(\alpha +m-1)))] 
[M_{\alpha}]^{-1} \cdot [M_{\alpha}]\\
&& =~~
[M_{\alpha}\backslash(M_{\alpha}\backslash I_r)\sqcup  I_c]= 
[I_r\sqcup  I_c]=[I],
\end{eqnarray*}
as required. 
\qed
\end{proof}

\q We shall need to use the isomorphisms $\phi_{\alpha}$ and $\rho_{\alpha}$ of
Theorem~\ref{theorem-dehomiso} in the two cases $\alpha=1$ and $\alpha=2$. The next two
results record the action of $\sigma_1$ and $\sigma_2$. 

\begin{lemma} \label{lemma-sigma-1}
For $1\leq i\leq m$ and $1\leq j\leq n-m$
\[
\sigma_1(x_{ij}) = qx_{ij}.
\] 
Consequently, $y_1x_{ij}=qx_{ij}y_1$ 
for $1\leq i\leq m$ and $1\leq j\leq n-m$.
\end{lemma}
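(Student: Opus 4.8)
The plan is to reduce the assertion $\sigma_1(x_{ij}) = q x_{ij}$ to computing the exponent of $q$ in the quasi-commutation relation between the consecutive minor $[M_1]$ and one generating minor of $\oqgmn$, and then to evaluate that exponent by stripping off common columns using the Quantum Muir's Law (Proposition~\ref{proposition-qmuir}).

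First I would record the data for $\alpha = 1$. Here $M_1 = \{1,\dots,m\}$, so $[M_1]$ is the consecutive minor $[1,\dots,m]$; since $1 \le j \le n-m$ forces $\widetilde{j+m} = j+m$ and $1 \le i \le m$ forces $\widetilde{m+1-i} = m+1-i$, the defining formula becomes $x_{ij} = [I]\,[M_1]^{-1}$ with $I = \bigl(\{1,\dots,m\}\setminus\{m+1-i\}\bigr)\cup\{j+m\}$, an $m$-element subset of $\{1,\dots,n\}$. By Lemma~\ref{lemma-normal} there is an integer $c$ with $[M_1][I] = q^c[I][M_1]$, and from the defining relation $\sigma_1(x_{ij})[M_1] = [M_1]x_{ij}$ we obtain
\[
\sigma_1(x_{ij}) = [M_1]\,[I]\,[M_1]^{-1}\,[M_1]^{-1} = q^c\,[I]\,[M_1]^{-1} = q^c x_{ij}.
\]
So the whole statement comes down to showing $c = 1$.

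To identify $c$, I would apply Proposition~\ref{proposition-qmuir} in the extending direction, starting from a small relation and adjoining the common columns. Put $C = \{1,\dots,m\}\setminus\{m+1-i\}$, a set of $m-1$ columns lying in both $M_1$ and $I$, so that $M_1 = C\sqcup\{m+1-i\}$ and $I = C\sqcup\{j+m\}$. Since $m+1-i \le m < m+1 \le j+m$, the first quantum matrix relation (in row $1$) gives $X_{1,\,m+1-i}X_{1,\,j+m} = q\,X_{1,\,j+m}X_{1,\,m+1-i}$; reading these single variables as $1\times 1$ minors, this says
\[
[\,m+1-i\,]\,[\,j+m\,] - q\,[\,j+m\,]\,[\,m+1-i\,] = 0
\]
in $\oq(M_{1,n})$, equivalently in $\oq(G(1,n))$. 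Now apply Proposition~\ref{proposition-qmuir} to this relation with $\widebar P = C$; the hypothesis holds because $P = \{1,\dots,n\}\setminus C$ consists of $m+1-i$ together with all of $\{m+1,\dots,n\}$, and so contains the two columns occurring in the relation. The conclusion is $[C\sqcup\{m+1-i\}][C\sqcup\{j+m\}] = q\,[C\sqcup\{j+m\}][C\sqcup\{m+1-i\}]$ in $\oqgmn$, i.e. $[M_1][I] = q[I][M_1]$, so $c = 1$ and $\sigma_1(x_{ij}) = q x_{ij}$. The final claim $y_1x_{ij} = q x_{ij}y_1$ is then immediate: in $\mathbb{K}[x_{ij}][y_1^{\pm 1};\sigma_1]$ one has $y_1 t = \sigma_1(t) y_1$, and $\sigma_1(x_{ij}) = q x_{ij}$.

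The only delicate point, and the main place an error could creep in, is the index bookkeeping: checking that neither $\widetilde{j+m}$ nor $\widetilde{m+1-i}$ wraps around for the stated ranges of $i$ and $j$, that $\{m+1-i\}$ and $\{j+m\}$ are disjoint from $C$ (so that $M_1$ and $I$ really arise from the two singleton index sets by adjoining the common block $C$), and that $P = \{1,\dots,n\}\setminus C$ indeed contains the two columns appearing in the small relation. Once these are verified, the argument is a one-line application of the quantum matrix relation followed by a one-line application of Muir's Law.
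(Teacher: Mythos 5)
Your proof is correct and is essentially the paper's own argument: the paper likewise reduces the claim to the quasi-commutation of $[M_1]$ with $x_{ij}[M_1]=[N\cup\{j+m\}]$ (your $[I]$, with $N=C$), derives the single-column relation $[m+1-i][j+m]=q[j+m][m+1-i]$ in $\oq(G(1,n))$ from $m+1-i<j+m$, and extends it by Proposition~\ref{proposition-qmuir} to get $[M_1][I]=q[I][M_1]$, hence $\sigma_1(x_{ij})=qx_{ij}$ and $y_1x_{ij}=qx_{ij}y_1$. Your extra index bookkeeping (no wrap-around, disjointness from $C$, the containment in $P$) is a harmless elaboration of what the paper leaves implicit.
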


\begin{proof} 
In order to calculate the commutation relation between $x_{ij}$ and $y_1$, we
need to consider the commutation relation between $x_{ij}$ and $M_1$. This
will be the same as the commutation relation between $x_{ij}M_1$ and $M_1$.
Set $N:=\{1,\dots,m\}\backslash\{m+1-i\}$. Then $x_{ij}M_1 = [N\cup \{j+m\}]$
and $M_1= [N\cup\{m+1-i\}]$. 
Note that $m+1-i<j+m$; so that 
$[m+1-i][j+m]=q[j+m][m+1-i]$ in $\oq(G(1,n))$. By using 
Proposition~\ref{proposition-qmuir},  
it follows that 
$M_1(x_{ij}M_1)=q(x_{ij}M_1)M_1$. Hence,  $M_1x_{ij}=qx_{ij}M_1$, and so 
$\sigma_1(x_{ij}) = qx_{ij}$ and 
$y_1x_{ij}=qx_{ij}y_1$, as claimed.\qed 
\end{proof}

\begin{lemma} \label{lemma-sigma-2}
For $1\leq i\leq m$ and $1\leq j <n-m$ 
\[
\sigma_2(x_{ij}) = qx_{ij}
\]
while $\sigma_2(x_{i,n-m}) = q^{-1}x_{i,n-m}$. Consequently, 
$y_2x_{ij}=qx_{ij}y_2$ for $1\leq i\leq m$ and $1\leq j <n-m$ 
while $y_2x_{i,n-m}=q^{-1}x_{i,n-m}y_2$.
\end{lemma}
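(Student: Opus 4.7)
The plan is to mimic the argument used in Lemma~\ref{lemma-sigma-1}, with $\alpha=1$ replaced by $\alpha=2$ and a case split that captures where the ``wraparound'' in the tilde-convention causes a sign change in the exponent. So I would set $M_2=\{2,3,\dots,m+1\}$ (all entries already lie in $\{1,\dots,n\}$, so no reduction is needed), and observe that by definition $x_{ij}M_2 = [M_2\cup\{\widetilde{j+m+1}\}\setminus\{\widetilde{m+2-i}\}]$, while $M_2=[M_2]$ itself. Since $1\le i\le m$ we have $\widetilde{m+2-i}=m+2-i\in M_2$, so we can write $N:=M_2\setminus\{m+2-i\}$ and, exactly as in Lemma~\ref{lemma-sigma-1}, reduce via Proposition~\ref{proposition-qmuir} to computing the commutation relation between the two $1\times 1$ minors $[m+2-i]$ and $[\widetilde{j+m+1}]$ in $\oq(G(1,n))$.

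The only substantive step is then the case split on $j$. If $1\le j<n-m$, then $j+m+1\le n$, so $\widetilde{j+m+1}=j+m+1$, and this number strictly exceeds $m+2-i$. The relation in $\oq(G(1,n))$ is therefore $[m+2-i][j+m+1]=q[j+m+1][m+2-i]$, which by Muir's Law lifts to $M_2(x_{ij}M_2)=q(x_{ij}M_2)M_2$, giving $\sigma_2(x_{ij})=qx_{ij}$. If instead $j=n-m$, then $j+m+1=n+1$, so $\widetilde{j+m+1}=1$. Because $i\le m$ gives $m+2-i\ge 2>1$, the roles of the two indices are now reversed: in $\oq(G(1,n))$ we have $[1][m+2-i]=q[m+2-i][1]$, i.e.\ $[m+2-i][1]=q^{-1}[1][m+2-i]$. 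Lifting again by Muir's Law produces $M_2(x_{i,n-m}M_2)=q^{-1}(x_{i,n-m}M_2)M_2$, from which $\sigma_2(x_{i,n-m})=q^{-1}x_{i,n-m}$. The consequent relations between $y_2$ and the $x_{ij}$ follow from the defining property $\sigma_\alpha(x_{ij})M_\alpha=M_\alpha x_{ij}$ of the skew polynomial extension, as in Lemma~\ref{lemma-sigma-1}.

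There is no real obstacle here; the only place one has to be attentive is in the $j=n-m$ case, where the tilde-reduction $\widetilde{n+1}=1$ moves the ``new'' column to the left of the ``deleted'' column $m+2-i$, flipping the inequality and hence the power of $q$. Everything else is a routine application of Proposition~\ref{proposition-qmuir} and the universal property of the skew polynomial extension, as already carried out for $\alpha=1$.
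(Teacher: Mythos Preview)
Your proposal is correct and follows exactly the same approach as the paper's proof: reduce via Proposition~\ref{proposition-qmuir} to a commutation between the $1\times 1$ minors $[m+2-i]$ and $[\widetilde{j+m+1}]$ in $\oq(G(1,n))$, and observe that the wraparound $\widetilde{n+1}=1$ in the case $j=n-m$ reverses the inequality and hence the power of $q$. The only difference is that you spell out the case $j<n-m$, whereas the paper simply refers back to the computation in Lemma~\ref{lemma-sigma-1}.
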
 
\begin{proof} 
When $j<n-m$, the calculations are similar to those in the proof of the 
previous result and so are omitted. 

Set $N:= \{2,\dots,m+1\}\backslash \{m+2-i\}$. Then, 
$x_{i,n-m}M_2 = [N\cup \{1\}]$
and $M_2= [N\cup\{m+2-i\}]$. 
Now, $1<m+2-i$; so that $[1][m+2-i]=q[m+2-i][1]$ in 
$\oq(G(1,n))$. By using 
Proposition~\ref{proposition-qmuir},  
it follows that 
$(x_{i,n-m}M_2)M_2=qM_2(x_{i,n-m}M_2)$. Hence, 
$x_{i,n-m}M_2=qM_2x_{i,n-m}$, and so 
$\sigma_2(x_{i,n-m}) =q^{-1}x_{i,n-m}$ and $y_2x_{i,n-m}=q^{-1}x_{i,n-m}y_2$, 
as claimed.\qed\\
\end{proof}


\section{Twisting by a $2$-cocycle} 

Given a $\mathbb{K}$-algebra $A$ that is graded by a semigroup, one can twist the
multiplication in $A$ by using a cocycle to produce a new multiplication. We
only need to deal with  $\mz^n$-graded algebras; so restrict our
discussion to this case. 

\begin{definition}
{\rm A {\em $2$-cocycle (with values in $\mathbb{K}^*$)} on $\mz^n$ is a map
$c:\mz^n\times\mz^n\goesto\mathbb{K}^*$ such that 
\[
c(s,t+u)c(t,u)=c(s,t)c(s+t,u)
\]
for all $s,t,u\in\mz^n$. 
}
\end{definition}

\q
Given a $\mz^n$-graded $\mathbb{K}$-algebra $A$ if $a$ is a homogeneous element 
in $A_s$, for $s\in\mz^n$, then we set $\content{a}:=s$. 

\q
Given a $\mz^n$-graded $\mathbb{K}$-algebra $A$ and a $2$-cocycle $c$ on $\mz^n$, 
one can define a new $\mathbb{K}$-algebra $T(A)$ in the following way. 
As a graded vector space, $A$ and $T(A)$ are isomorphic via an isomorphism 
$a\mapsto a'$. The multiplication in $T(A)$ is given by
\[
a'b':=c(s,t)(ab)'\]
for homogeneous elements $a,b\in A$ with content $s$ and $t$, respectively.
The defining condition of a $2$-cocycle is precisely the condition needed to 
ensure that this multiplication is associative. 
We refer to $T(A)$ as the {\em twist of $A$ by $c$}, and the map $a\mapsto a'$
is the {\em twist map}. \\

\q The property of being an integral domain is preserved under twists, as the 
next lemma shows. 

\begin{lemma}\label{lemma-twist-domain} 
Let $A$ be a $\mz^n$-graded $\k$-algebra that is an integral domain, and let
$c$ be a $2$-cocycle on $\mz^n$. Then $T(A)$ is an integral domain. 
\end{lemma}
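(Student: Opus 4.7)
The plan is to use a standard ``leading term'' argument, exploiting the fact that the twist preserves the $\mz^n$-grading. The only effect of $c$ on homogeneous multiplication is a nonzero scalar factor, so a homogeneous product $a'b'$ is nonzero in $T(A)$ precisely when $ab$ is nonzero in $A$. The task then reduces to promoting this homogeneous statement to a statement about arbitrary (inhomogeneous) products, which is handled by ordering the grading group compatibly with addition.

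First I would verify that $T(A)$ is itself $\mz^n$-graded with $T(A)_s = (A_s)'$: for $a \in A_s$ and $b \in A_t$, the definition $a'b' = c(s,t)(ab)'$ lies in $(A_{s+t})' = T(A)_{s+t}$. Next, I would fix the lexicographic order $<_{\text{lex}}$ on $\mz^n$, which is a total order compatible with addition in the sense that $s <_{\text{lex}} s'$ implies $s+u <_{\text{lex}} s'+u$ for all $u$.

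Now suppose $x', y' \in T(A)$ are both nonzero. Decompose $x = \sum_{s \in S} x_s$ and $y = \sum_{t \in T} y_t$ into their (finitely many) nonzero homogeneous components, and let $s_0 = \min_{<_{\text{lex}}} S$ and $t_0 = \min_{<_{\text{lex}}} T$. Expanding:
\[
x'y' = \sum_{s \in S,\, t \in T} c(s,t)(x_s y_t)',
\]
so the homogeneous component of $x'y'$ of content $s_0+t_0$ is $\sum c(s,t)(x_s y_t)'$, summed over $(s,t) \in S \times T$ with $s+t = s_0+t_0$. Compatibility of $<_{\text{lex}}$ with addition forces $(s,t) = (s_0,t_0)$ as the unique such pair, so this component equals $c(s_0,t_0)(x_{s_0}y_{t_0})'$. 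Since $A$ is a domain and $x_{s_0},y_{t_0} \neq 0$ we have $x_{s_0}y_{t_0} \neq 0$, and since $c(s_0,t_0) \in \mathbb{K}^*$ the whole component is nonzero; hence $x'y' \neq 0$.

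There is no real obstacle here; the only subtlety is choosing a total order on $\mz^n$ that is compatible with addition so that minima of the finite support sets behave well under the sum $s+t$. The lexicographic order does the job, and the argument is essentially the classical ``leading monomial'' proof that a graded integral domain has no zero divisors arising from inhomogeneous sums.
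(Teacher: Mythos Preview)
Your proof is correct and follows essentially the same approach as the paper: both put a translation-compatible total order on $\mz^n$ (the paper phrases this as making $\mz^n$ a totally ordered group, you specify the lexicographic order) and reduce to the homogeneous case via a leading-term argument, where the twisted product differs from the untwisted one only by a nonzero scalar. One trivial notational remark: you use $T$ both for the twist and for the support of $y$; renaming the latter would avoid a clash.
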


\begin{proof} 
We may view $A$ as graded by $\mz^n$, which
can be made into a totally ordered group; then $T(A)$ is graded by the
same totally ordered group. In order to see that the product of two nonzero
elements $a',b'$ of $T(A)$ is nonzero, it suffices to show that the
product of their highest terms is nonzero. Hence, we may assume that $a,b$ 
are homogeneous elements. In this case, $a'b'$ is a nonzero scalar multiple 
of $(ab)'$ and $ab\neq 0$, since $A$ is a domain. Hence, 
$T(A)$ is a domain, as required.\qed
\end{proof}

\q 
Our aim is to twist the quantum grassmannian $\oqgmn$ by a suitable
$2$-cocycle in such a way that the effect of the twist map is to cycle the
indices of the generating quantum minors. There is a technical problem
associated with this attempt, in that the defining relations for the quantum
grassmannian (quantum Pl\"ucker relations) are complicated to deal with. We 
avoid the problem by using the notion of noncommutative dehomogenisation 
introduced earlier.\\ 

\q 
Let the standard basis of $\mz^n$ be denoted by 
$\{\epsilon(1),\dots,\epsilon(n)\}$,
and let $(s_1,\dots,s_n)$ denote the element $s_1\epsilon(1)+\dots+
s_n\epsilon(n)$.

\q 
The quantum grassmannian $\oqgmn$ has a natural grading by $\mz^n$ determined 
by the {\em content} of a generating quantum minor, where 
$\content{[I]}:= \sum_{i\in I} \epsilon(i)$. 

\q Note that $M_{\alpha}$ is a homogeneous element of $\oqgmn$ and so the
$\mz^n$-grading of $\oqgmn$ extends in a natural way to
$\oqgmn[[M_{\alpha}]^{-1}]$ and hence to $\mathbb{K}[x_{ij}][y_{\alpha}^{\pm
1};\sigma_{\alpha}]$ by using the dehomogenisation isomorphism of
Theorem~\ref{theorem-dehomiso}.

\begin{lemma} Let $p=q^{2/m}$. 
The map  $c:\mz^n\times\mz^n\goesto \mathbb{K}^*$ defined by
\[
c((s_1,\dots,s_n),(t_1,\dots,t_n)):=\prod_{j\neq n}\,p^{s_nt_j}.
\]
is a $2$-cocycle.
\end{lemma}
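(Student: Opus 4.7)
The plan is to verify the 2-cocycle identity
\[
c(s,t+u)\,c(t,u) \;=\; c(s,t)\,c(s+t,u)
\]
directly from the definition, by expanding both sides as a single power of $p$ and comparing exponents. Since $c$ only depends on $s$ and $t$ through the coordinate $s_n$ (or $t_n$) of its first argument and the non-$n$ coordinates of its second argument, the formula simplifies immediately after applying $p^{a+b}=p^a p^b$.

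More concretely, writing $s=(s_1,\dots,s_n)$, $t=(t_1,\dots,t_n)$, $u=(u_1,\dots,u_n)$, I would first compute
\[
c(s,t+u)\,c(t,u) \;=\; \prod_{j\neq n} p^{s_n(t_j+u_j)} \cdot \prod_{j\neq n} p^{t_n u_j}
\;=\; p^{\,s_n\sum_{j\neq n} t_j \,+\, s_n\sum_{j\neq n} u_j \,+\, t_n\sum_{j\neq n} u_j},
\]
and then
\[
c(s,t)\,c(s+t,u) \;=\; \prod_{j\neq n} p^{s_n t_j} \cdot \prod_{j\neq n} p^{(s_n+t_n) u_j}
\;=\; p^{\,s_n\sum_{j\neq n} t_j \,+\, s_n\sum_{j\neq n} u_j \,+\, t_n\sum_{j\neq n} u_j}.
\]
The two expressions are manifestly equal, which establishes the cocycle condition. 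One should also remark that the values lie in $\mathbb{K}^*$, which is automatic once $p\in\mathbb{K}^*$ is fixed (and this is ensured by the standing assumption at the start of Section~\ref{section-basicdefs} that $p^m=q^2$ with $q\in\mathbb{K}^*$).

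There is no real obstacle here: the formula is bilinear in the exponents in precisely the way required, since the cross-term $s_n t_n$ never appears because the product excludes $j=n$. The only thing worth pausing over is confirming that no symmetrisation or sign issues arise, but the definition is a straightforward ``half-product'' (only the last coordinate of the first argument interacts with the other coordinates of the second), which is the standard template for producing such cocycles on $\mathbb{Z}^n$; hence the check reduces to the bookkeeping above.
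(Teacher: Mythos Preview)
Your proof is correct and follows exactly the paper's approach: the paper's proof is the one-line remark that each side equals $\prod_{j\neq n} p^{s_n t_j + s_n u_j + t_n u_j}$, and you have simply written out that computation in full. Your additional observation that the values lie in $\mathbb{K}^*$ because $p\in\mathbb{K}^*$ is a reasonable aside, though the paper does not bother to state it.
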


\begin{proof} Set $s=(s_1,\dots,s_n)$, $t=(t_1,\dots,t_n)$ and 
$u=(u_1,\dots,u_n)$. We have to check that 
\[
c(s,t+u)c(t,u)
=c(s,t)
c(s+t,u).
\]
The proof is routine, one checks that each side is equal to 
\[
\prod_{j\neq n}\,p^{s_n t_j+ s_n u_j+ t_n u_j}.
\]\qed 
\end{proof}
~\\

\q 
Now, we look at the effect of twisting the algebra 
$A :=\mathbb{K}[x_{ij}][y_1^{\pm 1};\sigma_1]$
by using the $2$-cocycle $c$. Write $y$ and $\sigma$ for $y_1$ and $\sigma_1$,
respectively.\\

\q 
We denote by $T(A)$ the twist of $A$ by using the 
$2$-cocyle $c$; so that if $a,b$ are homogeneous elements with content
$s=(s_1,\dots,s_n)$ and $t=(t_1,\dots,t_n)$, respectively, then 
\[
a'b':=c((s_1,\dots,s_n),(t_1,\dots,t_n))(ab)'.
\]

\q 
Now, we are in the case that $\alpha=1$, so that 
\[
x_{ij}=[\{1,\dots,m\}\cup \{j+m\}\backslash \{m+1-i\}][1,\dots,m]^{-1}.
\] 
Note that the content of $x_{ij}$ is $\epsilon(j+m)-\epsilon(m+1-i)$
and that the content of $y$ is $\epsilon(1)+\dots+\epsilon(m)$. 

\q 
As $A$ is generated by the homogeneous elements $x_{ij}$ and $y$, 
the twisted algebra $T(A)$ is generated by the homogeneous elements 
$x'_{ij}$ and $y'$. Our first aim is to describe the commutation 
relations satisfied by these elements.

\q
We will often abuse notation by writing $c(a,b)$ instead of 
$c(\content{a},\content{b})$ for homogeneous elements $a,b\in A$.

\q 
Note that the value taken by $c$ on a pair of elements from the set 
$\{x_{ij}, y\}$ is often equal to $p^0=1$. In fact, the only possibilities 
for a value other than $p^0$ occur in the  cases when $\epsilon(n)$ occurs 
in the content of the first argument in $c$. This can only occur for 
$x_{i,n-m}$ and we check that 
\[
c(x_{i,n-m},x_{l,n-m})=p^{-1},
\qquad\qquad c(x_{i,n-m},y)=p^m =q^2.
\]
while $c(x_{i,n-m},x_{l,j})= 1$ for $j<n-m$ and $c(y,x_{ij})=1$ for 
all $i,j$. 
These observations make the calculation of the twisted product on pairs 
from the set $\{x_{ij}', y'\}$ very easy. \\

\begin{lemma}\label{theorem-twist} 
$(x_{ij}')$ is a generic $q$-quantum matrix; that is, 
the algebra $\mathbb{K}[x'_{ij}]$
is isomorphic to $\mathcal{O}_q(M_{m,n-m})$.
Moreover 
\[
y'x_{ij}'=qx_{ij}'y'{\rm ~~for~~}j<n-m,\quad{\rm ~~and~~}\quad 
y'x_{i,n-m}'=q^{-1}x_{i,n-m}'y'.
\]
\end{lemma}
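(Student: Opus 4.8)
The plan is to compute each twisted product directly from the definition $a'b' = c(a,b)(ab)'$, using the table of cocycle values already assembled just before the statement. First I would treat the quantum matrix relations among the $x'_{ij}$. The algebra $\mathbb{K}[x_{ij}]$ is a copy of $\mathcal{O}_q(M_{m,n-m})$ by Theorem~\ref{theorem-dehomiso}, so I only need to check that twisting does not disturb the defining relations. For any pair $x_{ij}, x_{kl}$ with $j, l < n-m$, the content of the first argument does not involve $\epsilon(n)$, so $c(x_{ij}, x_{kl}) = c(x_{kl}, x_{ij}) = 1$; hence $x'_{ij} x'_{kl} = (x_{ij} x_{kl})'$ and the relation is transported verbatim. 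The only relations that could change involve a column index equal to $n-m$, i.e.\ elements $x_{i,n-m}$. There, using $c(x_{i,n-m}, x_{l,n-m}) = p^{-1}$ and $c(x_{l,n-m}, x_{i,n-m}) = p^{-1}$ (both contents have $\epsilon(n)$ in the first slot, with the same exponent), the cocycle factor is symmetric, so it cancels on the two sides of every commutation relation between $x_{i,n-m}$ and $x_{l,n-m}$; and for a relation between $x_{i,n-m}$ and $x_{l,j}$ with $j < n-m$ one slot gives $c(x_{i,n-m}, x_{l,j}) = 1$ and the reversed slot gives $c(x_{l,j}, x_{i,n-m}) = 1$ as well since $\epsilon(n)$ is not in $\content{x_{l,j}}$. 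So all the $\mathcal{O}_q(M_{m,n-m})$ relations survive unchanged, and $\mathbb{K}[x'_{ij}] \cong \mathcal{O}_q(M_{m,n-m})$.

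Next I would handle the relations between $y'$ and the $x'_{ij}$. By Lemma~\ref{lemma-sigma-1} we have $y x_{ij} = q x_{ij} y$ for all $i,j$ in $A$. Twisting: $y' x'_{ij} = c(y, x_{ij}) (y x_{ij})' = c(y,x_{ij}) q (x_{ij} y)'$, while $x'_{ij} y' = c(x_{ij}, y)(x_{ij} y)'$. Since $c(y, x_{ij}) = 1$ for all $i,j$, we get $y' x'_{ij} = q\, c(x_{ij}, y)^{-1} x'_{ij} y'$. For $j < n-m$ the content of $x_{ij}$ has no $\epsilon(n)$ component, so $c(x_{ij}, y) = 1$ and $y' x'_{ij} = q x'_{ij} y'$. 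For $j = n-m$ we have $c(x_{i,n-m}, y) = p^m = q^2$, hence $y' x'_{i,n-m} = q \cdot q^{-2} x'_{i,n-m} y' = q^{-1} x'_{i,n-m} y'$, which is exactly the claimed relation.

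The only genuinely delicate point is bookkeeping: one must be careful that the cocycle $c$ is not symmetric in general, so the factor appearing in $a'b'$ is $c(\content a, \content b)$ with the first argument's $\epsilon(n)$-component doing all the work; this is why the relation between $y'$ and $x'_{i,n-m}$ is twisted while that between $y'$ and $x'_{ij}$ (for $j<n-m$) is not, and why the relations among the $x'_{ij}$ themselves are untouched (there the $p$-power in $c(a,b)$ matches that in $c(b,a)$ whenever both contents carry $\epsilon(n)$, and is trivial otherwise). Having verified the presentation, I would conclude by invoking the universal property: the assignment $x'_{ij} \mapsto X_{ij}$ respects the $\mathcal{O}_q(M_{m,n-m})$ relations just checked, and since $T(A)$ and $A$ have the same underlying graded vector space, this extends to an isomorphism $\mathbb{K}[x'_{ij}] \cong \mathcal{O}_q(M_{m,n-m})$. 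I expect the main obstacle to be purely a matter of care in tracking which argument of the (non-symmetric) cocycle contributes, rather than any conceptual difficulty.
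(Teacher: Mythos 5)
Your verification of the twisted relations is correct and follows the paper's computation: the cocycle values you tabulate are the right ones, the symmetry/triviality argument disposes of all the quantum-matrix relations among the $x'_{ij}$ (including the mixed cases with column $n-m$), and the computation of $y'x'_{ij}$ versus $x'_{ij}y'$ from $c(y,x_{ij})=1$, $c(x_{ij},y)=1$ for $j<n-m$ and $c(x_{i,n-m},y)=q^2$ reproduces exactly the paper's argument for the second assertion.

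The gap is in your final step, the claim that $\mathbb{K}[x'_{ij}]\cong\mathcal{O}_q(M_{m,n-m})$. The universal property only gives you a map \emph{out of} the algebra defined by generators and relations, i.e.\ an epimorphism $\mathcal{O}_q(M_{m,n-m})\twoheadrightarrow\mathbb{K}[x'_{ij}]$ sending $X_{ij}\mapsto x'_{ij}$; your assignment $x'_{ij}\mapsto X_{ij}$ goes the wrong way, since at this stage you do not know a presentation of $\mathbb{K}[x'_{ij}]$ --- precisely what is at issue is whether the $x'_{ij}$ satisfy relations beyond the quantum-matrix ones. The remark that $T(A)$ and $A$ share the same underlying graded vector space does not by itself close this, because $\mathbb{K}[x'_{ij}]$ is a priori just some subalgebra of $T(A)$ and could be smaller than the image of $\mathbb{K}[x_{ij}]$ under the twist map is large. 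What is needed (and what the paper supplies) is the observation that every monomial $x'_{i_1j_1}\cdots x'_{i_tj_t}$ is a \emph{nonzero} scalar multiple of $(x_{i_1j_1}\cdots x_{i_tj_t})'$, so linear relations among monomials in the $x'_{ij}$ correspond bijectively to linear relations among the same monomials in the $x_{ij}$; the paper packages this as $\gk(\k[x'_{ij}])=\gk(\k[x_{ij}])=m(n-m)$ together with the fact that a proper quotient of the domain $\mathcal{O}_q(M_{m,n-m})$ has strictly smaller Gelfand--Kirillov dimension \cite[Proposition 3.15]{kl}, forcing the epimorphism to be injective. You have the scalar-multiple fact implicitly in your relation checks, but you should state it and use it (via a GK-dimension or Hilbert-series comparison) to rule out extra relations; as written, this step of your argument does not go through.
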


\begin{proof} 
First, we show that the $x_{ij}'$ satisfy the commutation relations for  a 
$q$-quantum matrix. The cases where 
$c(-,-)$ takes value $1$ are easy to check, for example, 
for $i_1<i_2$ and $j<n-m$,
\[
x_{i_1j}'x_{i_2j}'
= c(x_{i_1j},x_{i_2j})(x_{i_1j}x_{i_2j})'=(x_{i_1j}x_{i_2j})'
\]
while 
\[
x_{i_2j}'x_{i_1j}'= c(x_{i_2j},x_{i_1j})(x_{i_2j}x_{i_1j})'
=(x_{i_2j}x_{i_1j})'= q^{-1}(x_{i_1j}x_{i_2j})' 
=q^{-1}x_{i_1j}'x_{i_2j}'
\]
and so $x_{i_1j}'x_{i_2j}'=qx_{i_2j}'x_{i_1j}'$, as required. 

\q 
Also, for $i_1<i_2$, 
\[
x_{i_1(n-m)}'x_{i_2(n-m)}'= p^{-1}(x_{i_1(n-m)}x_{i_2(n-m)})'
\]
and
\[
x_{i_2(n-m)}'x_{i_1(n-m)}'= p^{-1}(x_{i_2(n-m)}x_{i_1(n-m)})'
\]
so again the desired $q$-commutation follows and the column relations 
are established. 

\q 
The row computations are similar and so are omitted. 

\q
When $i_1<i_2$ and $j_1<j_2$, note that 
$c(x_{i_1j_2},x_{i_2j_1}) = c(x_{i_2j_1},x_{i_1j_2})=1$; and so 
\[
x_{i_1j_2}'x_{i_2j_1}'= (x_{i_1j_2}x_{i_2j_1})'
=
(x_{i_2j_1}x_{i_1j_2})'=x_{i_2j_1}'x_{i_1j_2}',
\]
as required. 

\q 
Continuing with $i_1<i_2$ and $j_1<j_2$, note that 
$c(x_{i_1j_1},x_{i_2j_2}) = c(x_{i_2j_2},x_{i_1j_1})=1$; and so 
\begin{eqnarray*}
x_{i_1j_1}'x_{i_2j_2}'- x_{i_2j_2}'x_{i_1j_1}'
&=&
(x_{i_1j_1}x_{i_2j_2})'- (x_{i_2j_2}x_{i_1j_1})'
=
(x_{i_1j_1}x_{i_2j_2} - x_{i_2j_2}x_{i_1j_1})'\\
&=&
(q-q^{-1})(x_{i_1j_2}x_{i_2j_1})'
=
(q-q^{-1})x_{i_1j_2}'x_{i_2j_1}'.
\end{eqnarray*}
This finishes the verification that the $x_{ij}'$ satisfy the commutation 
relations of $\mathcal{O}_q(M_{m,n-m})$. 
As a result, there is an epimorphism from $\mathcal{O}_q(M_{m,n-m})$ onto 
$\k[x_{ij}']$. If this epimorphism were not an isomorphism then 
$\gk(\k[x_{ij}'])<\gk(\mathcal{O}_q(M_{m,n-m}))=m(n-m)$, by 
\cite[Proposition 3.15]{kl}, since $\mathcal{O}_q(M_{m,n-m})$ is a domain. 

\q 
However, any monomial $x_{i_{1}j_{1}}'x_{i_{2}j_{2}}'\dots x_{i_{t}j_{t}}'$ is
a nonzero scalar multiple of $(x_{i_{1}j_{1}}x_{i_{2}j_{2}}\dots
x_{i_{t}j_{t}})'$; and so a linear combination of such monomials is zero if and only if a corresponding linear combination of monomials in the 
$x_{ij}$ is zero. It follows that $\gk(\k[x_{ij}'])=\gk(\k[x_{ij}])
=m(n-m)$. Thus, $\k[x_{ij}']\cong\mathcal{O}_q(M_{m,n-m})$.

\q 
Now, we calculate how $y'$ commutes with the $x_{ij}'$. 

\q 
For $j<n-m$, observe that 
\[
x_{ij}'y'=c(x_{ij},y)(x_{ij}y)'=(x_{ij}y)'
\]
and so
\[
y'x_{ij}'=c(y,x_{ij})(yx_{ij})'=(yx_{ij})'=q(x_{ij}y)'
=qx_{ij}'y'.
\]
Finally, 
\[
x_{i,n-m}'y'= c(x_{i,n-m},y)(x_{i,n-m}y)'=q^{2}(x_{i,n-m}y)'
\]
and so 
\[
y'x_{i,n-m}'= c(y,x_{i,n-m})(yx_{i,n-m})'
=(yx_{i,n-m})'=q(x_{i,n-m}y)' = q^{-1}x_{i,n-m}'y'.
\]
\qed 
\end{proof}

\q
We now wish to consider the dehomogenisation isomorphism when 
$\alpha=2$. In order to avoid a clash of notation, we will write 
\[
\oqgmn[[M_2]^{-1}]\cong \mathbb{K}[z_{ij}][w^{\pm 1};\phi]
\]
where $z_{ij} 
:= [M_2\cup\{\widetilde{j+m+1}\}\backslash \{m+2-i\}]$ 
and $M_2= [2,3,\dots,m+1]$.

\begin{theorem}
\[
T(\mathbb{K}[x_{ij}][y^{\pm 1};\sigma])\quad
\cong \quad 
\mathbb{K}[z_{ij}][w^{\pm 1};\phi]
\]
via a map $\theta:T(\mathbb{K}[x_{ij}][y^{\pm 1};\sigma])\goesto 
\mathbb{K}[z_{ij}][w^{\pm 1};\phi]$ that sends 
$x_{ij}'$ to $z_{ij}$ 
and $y$ to $w$.
\end{theorem}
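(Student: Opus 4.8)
The plan is to exhibit the isomorphism $\theta$ explicitly on generators and check that the defining relations are respected, using the commutation relations we already have in hand. By Lemma~\ref{theorem-twist}, the twisted algebra $T(\mathbb{K}[x_{ij}][y^{\pm 1};\sigma])$ is generated by elements $x_{ij}'$ and $y'$ subject to: the $x_{ij}'$ satisfy the relations of $\mathcal{O}_q(M_{m,n-m})$, together with $y'x_{ij}' = qx_{ij}'y'$ for $j<n-m$ and $y'x_{i,n-m}' = q^{-1}x_{i,n-m}'y'$. On the target side, $\mathbb{K}[z_{ij}][w^{\pm 1};\phi]$ is generated by the $z_{ij}$ (which form a copy of $\mathcal{O}_q(M_{m,n-m})$ by Theorem~\ref{theorem-dehomiso} applied with $\alpha=2$) together with $w$, where $w z_{ij} = \phi(z_{ij}) w$ with $\phi = \sigma_2$. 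By Lemma~\ref{lemma-sigma-2}, $\sigma_2(z_{ij}) = qz_{ij}$ for $j<n-m$ while $\sigma_2(z_{i,n-m}) = q^{-1}z_{i,n-m}$, so $wz_{ij} = qz_{ij}w$ for $j<n-m$ and $wz_{i,n-m} = q^{-1}z_{i,n-m}w$.

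Comparing the two presentations, the assignment $x_{ij}' \mapsto z_{ij}$, $y' \mapsto w$ visibly sends generators to generators satisfying the same relations; hence it extends to a $\mathbb{K}$-algebra homomorphism $\theta$. To see that $\theta$ is surjective it suffices to note $w$ is invertible, so its inverse is hit as well; and $\theta$ restricts to the identification of the two copies of $\mathcal{O}_q(M_{m,n-m})$. For injectivity I would argue as usual via the skew-Laurent structure: every element of either algebra has a unique expression as a finite sum $\sum_k f_k y'^k$ (resp. $\sum_k g_k w^k$) with $f_k \in \mathbb{K}[x_{ij}']$ (resp. $g_k \in \mathbb{K}[z_{ij}]$), and $\theta$ maps the degree-$k$ piece to the degree-$k$ piece via the isomorphism $\mathbb{K}[x_{ij}'] \cong \mathcal{O}_q(M_{m,n-m}) \cong \mathbb{K}[z_{ij}]$; so a nonzero element cannot map to zero.

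The main obstacle is verifying that the two quantum-matrix presentations are genuinely matched, i.e. that no spurious extra relations appear. Concretely, one must confirm that the generic quantum matrix $\mathbb{K}[x_{ij}']$ coming from the twist and the generic quantum matrix $\mathbb{K}[z_{ij}]$ coming from dehomogenisation at $M_2$ use the row and column indices in the \emph{same} order, so that the naive map $x_{ij}' \mapsto z_{ij}$ really is an algebra map and not merely a bijection of generating sets. This amounts to checking that the sign of the $q$-commutation between $x_{i,n-m}'$ and $x_{l,n-m}'$ (which the cocycle twist flips relative to the untwisted $\mathbb{K}[x_{ij}]$) agrees with the sign between $z_{i,n-m}$ and $z_{l,n-m}$, and similarly for the mixed $y'$ vs.\ $w$ relations on the last column. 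All of these were prepared in Lemma~\ref{theorem-twist} and Lemma~\ref{lemma-sigma-2}, so once they are lined up the verification is routine; the key point is simply that the cocycle $c$ was designed precisely so that twisting converts the $\alpha=1$ picture into the $\alpha=2$ picture. Once the isomorphism $\theta$ is in place, composing $\phi_2^{-1}\circ\theta\circ(\text{twist of }\phi_1)$ will give the desired cycling automorphism of $\oqgmn$ up to the localisations involved, but that is the content of subsequent results rather than of this theorem.
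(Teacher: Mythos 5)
Your proposal is correct and follows the paper's strategy almost exactly: like the paper, you deduce from Lemma~\ref{theorem-twist} and Lemma~\ref{lemma-sigma-2} that the generating sets $\{x_{ij}',y'\}$ and $\{z_{ij},w\}$ satisfy identical commutation relations, define $\theta$ on generators, and get surjectivity because the generators (and $w^{-1}$) lie in the image. The one genuine divergence is the injectivity step: the paper observes that both sides are domains (Lemma~\ref{lemma-twist-domain} for the twist) of Gelfand--Kirillov dimension $m(n-m)+1$ and invokes \cite[Proposition 3.15]{kl}, whereas you use the skew-Laurent decomposition into $y'$-degree pieces together with the fact that $\theta$ restricts on coefficients to the isomorphism $\mathbb{K}[x_{ij}']\cong\mathcal{O}_q(M_{m,n-m})\cong\mathbb{K}[z_{ij}]$. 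Your route is equally valid and avoids the GK-dimension machinery, but it needs the (easy) supporting check that the twist map really carries the decomposition $\bigoplus_k \mathbb{K}[x_{ij}]\,y^k$ of the untwisted algebra onto $\bigoplus_k \mathbb{K}[x_{ij}'](y')^k$; this holds because the twist of a product of homogeneous elements is a nonzero scalar multiple of the product of their twists, exactly as exploited in the proof of Lemma~\ref{theorem-twist}. Finally, note a shared informality (present in the paper as well): to ``extend the assignment to a homomorphism'' \emph{out of} $T(\mathbb{K}[x_{ij}][y^{\pm 1};\sigma])$ one would need a presentation of the twisted algebra, which is not available a priori; the clean fix is to define the map in the opposite direction from the presented algebra $\mathbb{K}[z_{ij}][w^{\pm 1};\phi]$ into the twist (sending $z_{ij}\mapsto x_{ij}'$, $w\mapsto y'$, the latter being invertible in the twist) and then show it is bijective by exactly your, or the paper's, surjectivity-plus-injectivity argument.
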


\begin{proof} From Lemma~\ref{lemma-sigma-2} and 
Lemma~\ref{theorem-twist}, we 
see that the 
commutation relations among the $\{x_{ij}',y_1'\}$ of 
$T(\mathbb{K}[x_{ij}][y_1^{\pm 1};\sigma_1])$ are the same as the corresponding 
commutation relations among the generating set
$\{z_{ij}',y_{2}'\}$ of 
$\mathbb{K}[z_{ij}][y_2^{\pm 1};\sigma_2]$. 

\q 
Thus, we may define a homomorphism from $T(\mathbb{K}[x_{ij}][y^{\pm 1};\sigma])$ to
$\mathbb{K}[z_{ij}][w^{\pm 1};\phi]$ by sending $x_{ij}'$ to $z_{ij}'$ and $y$ to $w$.
This homomorphism is an epimorphism, since the generators of $\mathbb{K}[z_{ij}][w^{\pm
1};\phi]$ are in the image. Finally, the two algebras have the same 
Gelfand-Kirillov dimension, $m(n-m)+1$; 
so this epimorphism between two domains 
must also be a monomorphism, by \cite[Proposition 3.15]{kl}. 
\qed
\end{proof}

\q
We may identify 
$\oqgmn$ as a subalgebra of $\mathbb{K}[x_{ij}][y_1^{\pm 1};\sigma_1]$ 
via the dehomogenisation isomorphism 
$\oqgmn[[M_1]^{-1}]\cong \mathbb{K}[x_{ij}][y_1^{\pm 1};\sigma_1]$ and 
identify another copy of $\oqgmn$ with a 
subalgebra of $\mathbb{K}[z_{ij}][y_2^{\pm 1};\sigma_2]$ via the 
isomorphism 
$\oqgmn[[M_2]^{-1}]\cong \mathbb{K}[z_{ij}][y_2^{\pm 1};\sigma_2]$. Our next aim is 
to show that the image of the first copy of $\oqgmn$ under the map 
$\theta\circ T$ is the second copy of $\oqgmn$. In order to do this, we 
need to track the image of a generating quantum minor through the sequence 
of maps
\[
\oqgmn\stackrel{\phi_1}\goesto \mathbb{K}[x_{ij}][y_1^{\pm 1};\sigma_1]
\stackrel{T}\goesto \mathbb{K}[x_{ij}'][y_1'^{\pm 1}]
\stackrel{\theta}\goesto  \mathbb{K}[z_{ij}][y_2^{\pm 1};\sigma_2]
\stackrel{\rho_2}\goesto \oqgmn[[M_2]^{-1}] 
\]

\q 
First, we record the effect of the twist map on quantum minors. We need to 
consider quantum minors in each of the quantum matrix algebras 
$\mathbb{K}[x_{ij}]$ and  $\mathbb{K}[x_{ij}']$; so for a given row set $I$ and 
column set $J$ we will denote the corresponding quantum minors by 
$[I|J]_x$ and $[I|J]_{x'}$, respectively. 

\begin{lemma} Let $[I|J]_x$ be a quantum minor of the quantum matrix algebra $\mathbb{K}[x_{ij}]$ 
in the previous theorem.
Then the image of $[I|J]_x$ under the twist map is $[I|J]_{x'}$.
\end{lemma}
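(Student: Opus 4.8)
The plan is to reduce the statement to a comparison of two expansions of the same element: the quantum minor $[I|J]_x$ is by definition the alternating sum $\sum_{\sigma\in S_t}(-q)^{\ell(\sigma)}x_{i_{\sigma(1)}j_1}\cdots x_{i_{\sigma(t)}j_t}$, and since the twist map is a graded vector space isomorphism it sends this to $\sum_{\sigma\in S_t}(-q)^{\ell(\sigma)}\bigl(x_{i_{\sigma(1)}j_1}\cdots x_{i_{\sigma(t)}j_t}\bigr)'$. On the other hand $[I|J]_{x'}=\sum_{\sigma\in S_t}(-q)^{\ell(\sigma)}x'_{i_{\sigma(1)}j_1}\cdots x'_{i_{\sigma(t)}j_t}$. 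So it suffices to show that for every $\sigma$ the monomial term satisfies $x'_{i_{\sigma(1)}j_1}\cdots x'_{i_{\sigma(t)}j_t}=\bigl(x_{i_{\sigma(1)}j_1}\cdots x_{i_{\sigma(t)}j_t}\bigr)'$, i.e.\ that each such product of $x'$'s has twist-scalar equal to $1$.

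First I would recall from the iterated definition of the twisted product that
\[
x'_{k_1j_1}x'_{k_2j_2}\cdots x'_{k_tj_t}=\Bigl(\prod_{1\le a<b\le t} c(x_{k_aj_a},x_{k_bj_b})\Bigr)\bigl(x_{k_1j_1}\cdots x_{k_tj_t}\bigr)',
\]
which follows by induction on $t$ using the cocycle identity (the content of a product is the sum of the contents, so $c(a,bc)=c(a,b)c(a,c)$ on homogeneous elements). Thus the claim reduces to: for any choice of row indices $k_1,\dots,k_t$ (here a permutation of $I$) and the fixed column indices $j_1<\dots<j_t$ drawn from $J\subseteq\{1,\dots,n-m\}$, the product $\prod_{a<b}c(x_{k_aj_a},x_{k_bj_b})$ equals $1$.

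The main point — and the only computation — is the observation already recorded in the paragraph preceding Lemma~\ref{theorem-twist}: $c(x_{kj},x_{lr})\ne 1$ only when $\varepsilon(n)$ appears in the content of the \emph{first} argument, which forces $j=n-m$. In a quantum minor $[I|J]_x$ the column indices $j_1<\dots<j_t$ are strictly increasing, so the value $n-m$ can occur at most once among them, and if it occurs it is the last one, $j_t=n-m$. Hence in every product $c(x_{k_aj_a},x_{k_bj_b})$ with $a<b$ the first argument has column index $j_a<j_t\le n-m$, so $j_a\ne n-m$, and therefore each factor is $1$. This gives $\prod_{a<b}c(\cdot,\cdot)=1$, so the twist-scalar of each monomial term of $[I|J]_x$ is trivial, and summing over $\sigma$ yields $[I|J]_{x'}$ as required.

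The step I expect to need the most care is the bookkeeping in the induction formula for the twisted product of $t$ factors and the verification that the content additivity $\content{ab}=\content{a}+\content{b}$ lets us split $c$ multiplicatively in its second slot; once that is in place, the "only the last column can be $n-m$" remark does all the work and there is nothing further to check.
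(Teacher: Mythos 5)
Your argument is correct, but it follows a different route from the paper's. The paper disposes of this lemma by induction on the size of the minor via quantum Laplace expansions, observing that every cocycle value arising in the expansion is $1$; you instead expand $[I|J]_x$ directly into its $t!$ monomials, use the iterated-twist formula $x'_{k_1j_1}\cdots x'_{k_tj_t}=\bigl(\prod_{a<b}c(x_{k_aj_a},x_{k_bj_b})\bigr)(x_{k_1j_1}\cdots x_{k_tj_t})'$, and note that since the column indices in each monomial are strictly increasing, the only generator with $\varepsilon(n)$ in its content, namely $x_{i,n-m}$, can occur only as the last factor and hence never as a first argument of $c$ -- so every factor is $1$. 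Both proofs ultimately rest on the same observation (all relevant cocycle values are trivial), but yours avoids Laplace expansions entirely and is arguably more transparent, at the cost of needing the iterated-product formula. On that point, one small correction: the splitting $c(a,bc)=c(a,b)c(a,c)$ (equivalently $c(s,t+u)=c(s,t)c(s,u)$, and likewise in the first slot) is \emph{not} a consequence of the $2$-cocycle identity in general; it holds here because the specific cocycle $c(s,t)=\prod_{j\neq n}p^{s_nt_j}$ is visibly bimultiplicative (a bicharacter). You should cite the explicit formula for $c$ rather than the cocycle identity at that step; with that emendation the proof is complete.
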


\begin{proof}
This proof is a routine calculation, using induction on the size of the 
quantum minor and quantum Laplace expansions, noting that 
each $c(-,-)$ that occurs takes value 1.
\qed\\
\end{proof}

\begin{lemma} 
$c([I|J],y) = 1$ when $n-m\not\in J$ and $c([I|J],y) = q^2$ 
when $n-m\in J$ 
\end{lemma}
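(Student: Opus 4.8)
The plan is to compute the value of the cocycle $c$ on the pair $(\content{[I|J]}, \content{y})$ directly from the formula $c((s_1,\dots,s_n),(t_1,\dots,t_n)) = \prod_{j \neq n} p^{s_n t_j}$, so that only the coordinate $s_n$ of the content of $[I|J]_x$ and the coordinates $t_j$ of the content of $y$ for $j \neq n$ matter. Recall from the $\alpha = 1$ case that $y = M_1$ has content $\epsilon(1) + \dots + \epsilon(m)$, so $t_j = 1$ for $1 \leq j \leq m$ and $t_j = 0$ otherwise; in particular $\sum_{j \neq n} t_j = m$ (this uses $m < n$). Hence $c([I|J], y) = p^{m s_n}$, where $s_n$ is the $\epsilon(n)$-coordinate of $\content{[I|J]_x}$. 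Since $p^m = q^2$, this reduces the lemma to showing that $s_n = 1$ when $n-m \in J$ and $s_n = 0$ when $n - m \notin J$.

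The key step is therefore to identify the $\epsilon(n)$-coefficient in the content of a quantum minor $[I|J]_x$ of the quantum matrix algebra $\k[x_{ij}]$ sitting inside $\oqgmn[[M_1]^{-1}]$. First I would record that $\content{x_{ij}} = \epsilon(j+m) - \epsilon(m+1-i)$; among these generators, $\epsilon(n)$ occurs (with coefficient $+1$) exactly in $x_{i,n-m}$, for $1 \leq i \leq m$, and in no other $x_{ij}$ (note $m + 1 - i \leq m < n$, so the negative term never contributes $\epsilon(n)$). A quantum minor $[I|J]_x = \sum_{\sigma} (-q)^{\ell(\sigma)} x_{i_{\sigma(1)} j_1} \cdots x_{i_{\sigma(t)} j_t}$ is homogeneous with content $\sum_{k} \content{x_{i_k j_k}} = \sum_k (\epsilon(j_k + m) - \epsilon(\text{row term}))$, independent of $\sigma$. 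The $\epsilon(n)$-coefficient of this is precisely the number of column indices $j_k \in J$ with $j_k + m = n$, i.e. with $j_k = n - m$. Since $J$ has distinct entries, this count is $1$ if $n - m \in J$ and $0$ otherwise, which is exactly what is needed.

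Putting the pieces together: $c([I|J], y) = p^{m s_n} = (q^2)^{s_n}$, which equals $1$ when $n - m \notin J$ and $q^2$ when $n - m \in J$. I do not anticipate a genuine obstacle here — the statement is a direct bookkeeping computation with contents — but the one point requiring care is confirming that $\epsilon(n)$ cannot sneak in through the row part of some $\content{x_{ij}}$ (it cannot, since $m + 1 - i$ ranges over $\{1,\dots,m\}$ and $m < n$), and that $\sum_{j\neq n} t_j$ for $y$ really is $m$ rather than being truncated (again guaranteed by $m < n$, so all of $1,\dots,m$ lie in the product range $j \neq n$).
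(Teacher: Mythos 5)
Your proof is correct and takes essentially the same route as the paper: reduce the cocycle value to the $\epsilon(n)$-coefficient $s_n$ of $\content{[I|J]_x}$, note $\content{y}=\epsilon(1)+\dots+\epsilon(m)$ so that $c([I|J],y)=p^{m s_n}=(q^2)^{s_n}$, and identify when $s_n=1$. The only difference is that you establish the content fact directly from $\content{x_{ij}}=\epsilon(j+m)-\epsilon(m+1-i)$, whereas the paper simply cites Proposition 4.3 of Lenagan--Russell; your version is a touch more self-contained but mathematically the same.
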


\begin{proof}
This follows from the fact that $\epsilon(n)$ appears (with nonzero coefficients) in $\content{[I|J]}$ if and only if $n-m\in J$ by \cite[Proposition 4.3]{len-russ}. \qed \\
\end{proof}

\q 
As before, for a given row set $I$ and 
column set $J$ we will denote the corresponding quantum minors of the various quantum matrix algebras by 
$[I|J]_x, [I|J]_{x'}$ and $[I|J]_z$, respectively. 

\begin{lemma}
Let $I=[i_1,\dots,i_m]$ be a generating quantum minor of $\oqgmn$. Then 

\[
\rho_2\circ\theta\circ T \circ\phi_1([I]) = 
\left\{
\begin{array}{l}~~~~[i_1+1,\dots,i_m+1]\quad 
{\rm if~}i_m\neq n\\
q^{-2}[1, i_1+1,\dots,i_{m-1}+1]
\quad {\rm ~~~~if~}i_m= n
\end{array}
\right.
\]
\end{lemma}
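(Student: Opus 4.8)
The plan is to track a generating quantum minor $[I]$, with $I=\{i_1<\dots<i_m\}$, through the four maps $\phi_1,T,\theta,\rho_2$ in turn, applying at each stage a result already at hand: Corollary~\ref{corollary-phi-of-I} for $\phi_1$; the lemma on the image of a quantum minor under the twist map, together with the lemma computing $c([I|J],y)$, for $T$; the defining recipe of $\theta$ from the previous theorem; and the formula for $\rho_\alpha$ on a quantum minor (obtained in the proof of Corollary~\ref{corollary-phi-of-I} from \cite[Proposition 4.3]{len-russ}) for $\rho_2$. Write $I_r:=I\cap M_1$, $I_c:=I\setminus I_r$, and set $R:=(m+1)-(M_1\setminus I_r)$ and $C:=I_c-m$; then Corollary~\ref{corollary-phi-of-I} with $\alpha=1$ gives $\phi_1([I])=[R\mid C]_x\,y_1$, a product of two homogeneous elements of $\mathbb{K}[x_{ij}][y_1^{\pm 1};\sigma_1]$.

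Next I would apply the twist map to this product: using $a'b'=c(a,b)(ab)'$, the image of $[R\mid C]_x\,y_1$ is $c([R\mid C]_x,y_1)^{-1}$ times $[R\mid C]_{x'}\,y_1'$, where the lemma on the twist of a quantum minor is used to identify the image of $[R\mid C]_x$ with $[R\mid C]_{x'}$. By the lemma computing $c([I|J],y)$, the scalar $c([R\mid C]_x,y_1)^{-1}$ equals $1$ when $n-m\notin C$ and $q^{-2}$ when $n-m\in C$. Since $\theta$ is an algebra isomorphism sending $x_{ij}'\mapsto z_{ij}$ and $y_1'\mapsto y_2$, it carries this to the same scalar times $[R\mid C]_z\,y_2$. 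Finally $\rho_2([R\mid C]_z)=[M_2\setminus((m+2)-R)\sqcup((m+1)+C)][M_2]^{-1}$ by the dehomogenisation formula at $\alpha=2$, while $\rho_2(y_2)=[M_2]$ by Theorem~\ref{theorem-dehomiso}; multiplying and cancelling $[M_2]^{-1}[M_2]=1$ shows that $\rho_2\circ\theta\circ T\circ\phi_1([I])$ is $1$ or $q^{-2}$ times $[M_2\setminus((m+2)-R)\sqcup((m+1)+C)]$.

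It then remains to simplify the index set and split into cases. A direct computation gives $(m+2)-R=1+(M_1\setminus I_r)=M_2\setminus(1+I_r)$ and $(m+1)+C=1+I_c$, so, as $1+I_r\subseteq M_2$ and $1+I_c\subseteq\{m+2,\dots,n+1\}$ are disjoint, $M_2\setminus((m+2)-R)\sqcup((m+1)+C)=(1+I_r)\sqcup(1+I_c)=1+I$. Now $n-m\in C$ exactly when $n\in I$, i.e.\ when $i_m=n$. Hence when $i_m\neq n$ the scalar is $1$ and all entries of $1+I$ lie in $\{1,\dots,n\}$, giving $[i_1+1,\dots,i_m+1]$; when $i_m=n$ the scalar is $q^{-2}$ and the convention $\widetilde{n+1}=1$ rewrites $1+I$ as $\{1,i_1+1,\dots,i_{m-1}+1\}$, giving $q^{-2}[1,i_1+1,\dots,i_{m-1}+1]$, as claimed. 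The bulk of the work is routine bookkeeping of the successive index shifts; the point to watch is that the same condition $i_m=n$ governs both the appearance of the scalar $q^{-2}$ (through $c([R\mid C]_x,y_1)$) and the need to reduce $n+1$ modulo $n$ in the output, so the two case distinctions must be recognised as one and the same.
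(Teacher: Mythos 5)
Your proposal is correct and follows essentially the same route as the paper: apply Corollary~\ref{corollary-phi-of-I} for $\phi_1$, then the twist lemma together with the cocycle computation $c([I|J],y)\in\{1,q^2\}$ for $T$, then $\theta$ on generators, and finally the dehomogenisation formula for $\rho_2$ with the same index bookkeeping showing the resulting index set is $1+I$. The only cosmetic difference is that the paper rewrites the row and column sets in $M_2$-form before applying $\rho_2$, while you apply the $\rho_2$ formula directly and simplify afterwards; the substance, including the observation that the condition $i_m=n$ governs both the scalar $q^{-2}$ and the reduction of $n+1$ modulo $n$, is identical.
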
 

\begin{proof}Note that 
\[
\phi_1(I)= [(m+1)-M_1\backslash I_r | I_c -m]_xy_1;
\]
and so
\[
T\circ\phi_1(I) = ([(m+1)-M_1\backslash I_r | I_c -m]_xy_1)'
= C^{-1}[(m+1)M_1\backslash I_r | I_c -m]_{x'}y_1'
\]
where $C:= c([(m+1)-M_1\backslash I_r | I_c -m]_x,y)$ and note that 
$C=1$ if $n-m\not\in I_c -m$ (and so if $n\not\in I$), while 
$C=q^2$ if $n-m\in I_c -m$ (and so if $n\in I$).

\q
Thus, 
\begin{eqnarray*}
\theta\circ T\circ\phi_1(I) &=& 
C^{-1}[(m+1) -M_1\backslash I_r | I_c -m]_{z}w \\
&=&
C^{-1}[(m+2) -M_2\backslash (I_r+1) | (I_c+1) -(m+1)]_{z}w 
\end{eqnarray*}
Finally, 
\begin{eqnarray*}
\rho_2\circ\theta\circ T\circ\phi_1(I)
&=&
C^{-1}\rho_2([(m+2) -M_2\backslash (I_r+1) | (I_c+1) -(m+1)]_{z}w)\\
&=&
C^{-1}[(I_r+1)\sqcup  (I_c+1)] = C^{-1}[I+1]
\end{eqnarray*}
and the result follows. Note that the last equality is obtained by 
the same calculation as in the proof of Corollary~\ref{corollary-phi-of-I}.
\qed
\end{proof}

\q
We can now reach our conclusion.

\begin{theorem} \label{theorem-T-iso}
\[
T(\oqgmn)\cong\oqgmn
\]
via a map $\theta$ that sends 
$[i_1,\dots,i_m]'$ to $[i_1+1,\dots,i_m+1]$, for $i_m< n$, and 
$[i_1,\dots,i_{m-1},n]$ is sent to $q^{-2}[1, i_1+1,\dots,i_{m-1}+1]$. 
\end{theorem}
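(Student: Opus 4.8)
The plan is to assemble Theorem~\ref{theorem-T-iso} from the machinery already in place, treating it as essentially a corollary of the preceding lemmas. The overall strategy is: the twist $T$ is defined on all of $\oqgmn[[M_1]^{-1}] \cong \mathbb{K}[x_{ij}][y_1^{\pm 1};\sigma_1]$, and it restricts to the subalgebra $\oqgmn$ because $\oqgmn$ is a $\mz^n$-graded subalgebra (the quantum minors are homogeneous and generate). So $T(\oqgmn)$ makes sense as a subalgebra of $T(\mathbb{K}[x_{ij}][y_1^{\pm 1};\sigma_1])$. Then the composite $\rho_2 \circ \theta$ carries this into $\oqgmn[[M_2]^{-1}]$, and the content of the previous lemma is precisely that a generating quantum minor $[i_1,\dots,i_m]'$ is sent to $[i_1+1,\dots,i_m+1]$ (or the scalar multiple $q^{-2}[1,i_1+1,\dots,i_{m-1}+1]$ when $i_m=n$), all of which lie in the second identified copy of $\oqgmn$.

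First I would observe that $\theta \circ T \circ \phi_1$ restricted to $\oqgmn$ has image landing inside the second copy of $\oqgmn \subseteq \mathbb{K}[z_{ij}][y_2^{\pm 1};\sigma_2]$: this is immediate from the previous lemma since the images of all generating quantum minors are (scalar multiples of) generating quantum minors of the second copy, and these generate. Next I would argue this map is injective and surjective onto the second copy. Injectivity: $\theta \circ T$ is already known to be an isomorphism of the big algebras by the earlier theorem, and $\phi_1$ is the dehomogenisation isomorphism, so the composite is injective on $\oqgmn[[M_1]^{-1}]$, hence on $\oqgmn$. Surjectivity onto the second copy of $\oqgmn$: every generating quantum minor $[j_1,\dots,j_m]$ of the target copy is the image of $[j_1-1,\dots,j_m-1]'$ when $1\notin\{j_1,\dots,j_m\}$, and is a nonzero scalar multiple of the image of $[\widetilde{j_2-1},\dots,\widetilde{j_m-1},n]'$ (equivalently $[j_2-1,\dots,j_m-1,n]'$) when $j_1=1$; since scalars are units, every generator of the target is in the image, so the map is onto. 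Therefore $T(\oqgmn)$, transported by $\theta$, is isomorphic to $\oqgmn$, and reading off the effect on generators from the previous lemma gives exactly the stated formula.

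I would then note that the identification in the statement is the one where we suppress $\phi_1$ and $\rho_2$: $\oqgmn$ is viewed inside $\mathbb{K}[x_{ij}][y_1^{\pm 1};\sigma_1]$ via $\phi_1$ and the "second copy" inside $\mathbb{K}[z_{ij}][y_2^{\pm 1};\sigma_2]$ via the $\alpha=2$ dehomogenisation, and the map $\theta$ in the statement is the corestriction of $\rho_2\circ\theta\circ T\circ\phi_1$ to these copies. Under this reading the displayed formula is literally the content of the previous lemma. Finally, one should remark that $y$ (respectively $M_\alpha$) need not be tracked separately: $\theta$ sends $y$ to $w$ by the earlier theorem, but $y\notin\oqgmn$, so this plays no role in the restricted statement beyond ensuring the big-algebra isomorphism used for injectivity.

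The main obstacle, such as it is, is bookkeeping rather than mathematics: one must be careful that the two occurrences of $\oqgmn$ genuinely sit inside the respective localisations as graded subalgebras so that the twist and the various maps restrict sensibly, and that the cyclic conventions ($\widetilde{\,\cdot\,}$ reduction mod $n$, and the shift $i_m=n \mapsto 1$) are handled consistently with the index arithmetic in the previous lemma. Once the restriction is justified and surjectivity onto the generators is checked using that the exceptional scalars $q^{-2}$ are units, there is nothing left to prove; the Gelfand--Kirillov dimension argument and the domain property needed for the big-algebra isomorphism have already been carried out in the earlier theorem, so they need only be invoked.
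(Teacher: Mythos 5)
Your proposal is correct and follows essentially the same route as the paper: the paper's proof of this theorem is simply that it follows immediately from the preceding lemma computing $\rho_2\circ\theta\circ T\circ\phi_1$ on generating quantum minors, and your write-up just makes explicit the routine points (the twist restricts to the graded subalgebra $\oqgmn$, injectivity comes from the already-established big-algebra isomorphism, and surjectivity onto the second copy follows because every generator is hit up to the unit scalar $q^{-2}$).
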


\begin{proof}This follows immediately from the previous lemma. \qed
\end{proof}

\section{Twisting the $\ch$-prime spectrum}

In this section we assume that $q$ is a not a 
root of unity, in order that we
know that the prime ideals of $\oqgmn$ 
are completely prime, see \cite[Theorem
5.2]{llr}. 

\q The natural $\mz^n$-grading on $\oqgmn$ induces a rational action of the
algebraic torus $\ch:=(\mathbb{K}^*)^n$ on $\oqgmn$ by
$\mathbb{K}$-automorphisms via 
\[ (h_1 ,\dots , h_n ). [i_1, \dots , i_m]
= h_{i_1} \cdots h_{i_m}[i_1, \dots , i_m],
\]
(see \cite[Lemma II.2.11]{bg} for more details). In this setting, the
homogeneous prime ideals of $\oqgmn$ are exactly those primes that are
invariant under this torus action. Hence homogeneous primes are also called
{\it $\ch$-primes}, and the set $\hspec (\oqgmn)$ of all $\ch$-primes of
$\oqgmn$ is called the {\it $\ch$-prime spectrum} of $\oqgmn$. It was proved
in \cite{llr} that this set is finite, and its cardinality was computed. The
importance of the $\ch$-prime spectrum was pointed out by Goodearl and Letzter
who proved that the $\ch$-prime spectrum parametrizes a natural stratification
of the prime spectrum of $\oqgmn$.

\begin{theorem} 
Suppose that $q$ is not a root of unity. 
Let $P$ be an $\ch$-prime ideal of $\oqgmn$. Then 
$T(P):=\{p'\mid p\in P\}$ is an $\ch$-prime ideal of $T(\oqgmn)$. 
\end{theorem}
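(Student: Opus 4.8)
The plan is to exploit the fact that the twist map $a \mapsto a'$ is a graded vector space isomorphism and that, on homogeneous elements, $a'b'$ differs from $(ab)'$ only by a nonzero scalar. First I would observe that $T(P) = \{p' \mid p \in P\}$ is a $\k$-subspace of $T(\oqgmn)$ because the twist map is $\k$-linear, and it is a two-sided ideal: for homogeneous $a \in \oqgmn$ and homogeneous $p \in P$, we have $a'p' = c(a,p)(ap)'$ with $ap \in P$, so $a'p' \in T(P)$, and similarly $p'a' \in T(P)$; the general case follows by decomposing into homogeneous components, using that $P$ is homogeneous (being an $\ch$-prime, hence $\mz^n$-graded by the correspondence recalled in the section). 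So $T(P)$ is a homogeneous ideal of $T(\oqgmn)$.

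Next I would check that $T(P)$ is prime. Since $q$ is not a root of unity, $\oqgmn$ is an algebra in which $\ch$-primes are completely prime (by \cite[Theorem 5.2]{llr}), and $\oqgmn/P$ is a domain; I would want the analogous statement for $T(\oqgmn)/T(P)$. The cleanest route is to note that the quotient $T(\oqgmn)/T(P)$ is naturally identified, as a graded vector space, with $\oqgmn/P$, and that the induced multiplication on $\oqgmn/P$ is exactly the cocycle twist of the (graded) domain $\oqgmn/P$ by the restriction of $c$. Then Lemma~\ref{lemma-twist-domain} applies directly: the twist of a $\mz^n$-graded domain by a $2$-cocycle is again a domain, so $T(\oqgmn)/T(P)$ is a domain and hence $T(P)$ is (completely) prime. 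Finally, $T(P)$ is $\ch$-stable because it is homogeneous for the $\mz^n$-grading on $T(\oqgmn)$, and homogeneous ideals are precisely the $\ch$-invariant ones, so $T(P)$ is an $\ch$-prime of $T(\oqgmn)$.

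The one point that needs a little care — and which I expect to be the main (minor) obstacle — is the identification of $T(\oqgmn)/T(P)$ with the twist of $\oqgmn/P$. One must verify that the quotient grading is compatible with the twist: since $P$ is homogeneous, $\oqgmn/P$ inherits a $\mz^n$-grading, the cocycle $c$ descends to act on its homogeneous components by the same formula, and the twisted multiplication on $\oqgmn/P$ agrees with the multiplication that $T(\oqgmn)/T(P)$ carries. This is a formal check but must be spelled out so that Lemma~\ref{lemma-twist-domain} can be invoked verbatim. Everything else is bookkeeping with the cocycle scalars $c(-,-)$, which are always nonzero and therefore never destroy membership in an ideal or nonvanishing of a product.
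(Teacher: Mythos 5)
Your proposal is correct and follows essentially the same route as the paper: both pass to the quotient $\oqgmn/P$ (a domain since $\ch$-primes are completely prime when $q$ is not a root of unity), identify $T(\oqgmn)/T(P)$ with the twist $T(\oqgmn/P)$, and invoke Lemma~\ref{lemma-twist-domain}. The extra checks you flag (that $T(P)$ is a homogeneous two-sided ideal and that the quotient twist identification is compatible with the grading) are exactly the routine verifications the paper leaves implicit.
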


\begin{proof}
The algebra $\oqgmn/P$ inherits a $\mz^n$-grading, as $P$ is homogeneous; and
so we can form the twisted algebra $T(\oqgmn/P)$. It then follows that
$T(\oqgmn/P)\cong T(\oqgmn)/T(P)$. Hence, it is enough to show that
$T(\oqgmn/P)$ is a domain and this follows from
Lemma~\ref{lemma-twist-domain}. 
\end{proof}

\begin{corollary} \label{corollary-primes-permuted}
Suppose that $q$ is not a root of unity. Then 
\[
\theta(T(\hspec(\oqgmn)))= \hspec(\oqgmn),
\] 
where $\theta$ is 
the isomorphism defined in Theorem~\ref{theorem-T-iso}.
\end{corollary}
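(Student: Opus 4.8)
The plan is to show that $\theta\circ T$ is an injective self-map of the finite set $\hspec(\oqgmn)$; such a map is automatically a bijection, and that is precisely the assertion $\theta(T(\hspec(\oqgmn)))=\hspec(\oqgmn)$. Finiteness of $\hspec(\oqgmn)$ is already available from \cite{llr}, and completely-prime-ness of the primes involved (needed because the twist-is-a-domain argument rests on Lemma~\ref{lemma-twist-domain}) is guaranteed by the standing hypothesis that $q$ is not a root of unity. So everything reduces to two facts: that $T$ sends $\ch$-primes of $\oqgmn$ injectively to $\ch$-primes of $T(\oqgmn)$, and that $\theta$ carries $\ch$-primes of $T(\oqgmn)$ bijectively to $\ch$-primes of $\oqgmn$.

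The first fact is essentially the content of the theorem preceding the corollary: for an $\ch$-prime $P$ of $\oqgmn$, the set $T(P)=\{p'\mid p\in P\}$ is an $\ch$-prime of $T(\oqgmn)$. Injectivity of $P\mapsto T(P)$ is immediate, since the twist map $a\mapsto a'$ is a bijection of the underlying graded vector space and hence $T(P)$ determines $P$. (If a genuine inverse is wanted one can twist back by the inverse cocycle, but injectivity is all that is required here.)

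For the second fact I would first observe that $\theta$ is a graded isomorphism relative to the coordinate-shift automorphism of the grading group. Writing $c$ for the automorphism of $\mz^n$ cycling the standard basis vectors $\epsilon(i)\mapsto\epsilon(\widetilde{i+1})$, the claim is that $\theta$ maps the degree-$s$ homogeneous component of $T(\oqgmn)$ onto the degree-$(c\cdot s)$ component of $\oqgmn$. This is read off the formula in Theorem~\ref{theorem-T-iso}: a generating minor $[I]'$ has content $\sum_{i\in I}\epsilon(i)$ and is sent by $\theta$ to a scalar multiple of $[\,\widetilde{i_1+1},\dots,\widetilde{i_m+1}\,]$, whose content is $c\cdot\content{[I]'}$; since these minors generate and $\theta$ is an algebra map, the assertion propagates to all homogeneous components. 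Consequently $\theta$ carries homogeneous ideals to homogeneous ideals in a way reversed by $\theta^{-1}$, and, since homogeneous primes coincide with $\ch$-primes in each algebra, $\theta$ induces a bijection $\hspec(T(\oqgmn))\to\hspec(\oqgmn)$.

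Combining the two facts, $\theta\circ T$ is an injective map from $\hspec(\oqgmn)$ to itself, hence a bijection, which is the desired conclusion. The only step that is not purely formal is the grading-compatibility of $\theta$, but this is just a bookkeeping check against the explicit description of $\theta$ on generating quantum minors in Theorem~\ref{theorem-T-iso}, so I anticipate no real obstacle.
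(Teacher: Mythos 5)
Your proposal is correct and follows essentially the same route as the paper: the preceding theorem gives that $T$ sends $\ch$-primes to $\ch$-primes, $\theta$ then carries these to $\ch$-primes of $\oqgmn$, and injectivity of $\theta\circ T$ on the finite set $\hspec(\oqgmn)$ forces surjectivity. Your explicit check that $\theta$ is graded (up to the cyclic shift of $\mz^n$), so that it takes homogeneous primes to homogeneous primes, is simply a careful spelling-out of a step the paper leaves implicit in the phrase ``their images under the isomorphism $\theta$ are distinct $\ch$-prime ideals.''
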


\begin{proof}
If $P,Q$ are two distinct $\ch$-prime ideals of $\oqgmn$ then $T(P)$ and 
$T(Q)$ are distinct $\ch$-prime ideals of $T(\oqgmn)$; and so their images 
under the isomorphism $\theta$ are distinct $\ch$-prime ideals of 
$\oqgmn$. As the set of $\ch$-prime ideals is finite, this establishes 
the claim.\qed \\
\end{proof}

\q
It follows that if $P$ is an $\ch$-prime ideal of $\oqgmn$ then a quantum
minor $[i_1,\dots,i_m]$ is in $P$ if and only if the quantum minor 
$[i_1+1,\dots,i_m+1]$ is in $\theta(T(P))$, where $i_m+1:=1$ if $i_m=n$. 
In other words, the sets of quantum minors that are in $\ch$-prime ideals 
are permuted by $\theta\circ T$. 

\q
Note that in \cite{llr}, it was 
shown that each $\ch$-prime ideal of $\oqgtwofour$ 
is generated by the quantum minors that it contains, and it was conjectured 
that this holds in any $\oqgmn$.


\newpage 

\noindent 
S Launois: \\
School of Mathematics, Statistics and Actuarial Science,\\
University of Kent\\
Canterbury, Kent CT2 7NF, UK\\
Email: {\tt S.Launois@kent.ac.uk} \\

\noindent 
T H Lenagan: \\
Maxwell Institute for Mathematical Sciences\\
School of Mathematics, University of Edinburgh,\\
James Clerk Maxwell Building, King's Buildings, Mayfield Road,\\
Edinburgh EH9 3JZ, Scotland, UK\\
E-mail: {\tt tom@maths.ed.ac.uk} 


\end{document}